\documentclass[11pt]{article}

\usepackage[margin=1in]{geometry}
\usepackage{amsmath,amssymb,amsthm,mathtools}
\usepackage{algorithm}
\usepackage{algpseudocode}
\usepackage{stmaryrd}
\usepackage{microtype}
\usepackage{xcolor}
\usepackage{graphicx}
\usepackage{aliascnt}
\usepackage[colorlinks=true,linkcolor=blue!60!black,urlcolor=blue!60!black]{hyperref}
\usepackage[nameinlink]{cleveref}
\makeatletter
\renewcommand{\theHALG@line}{\thealgorithm.\arabic{ALG@line}}
\makeatother

\newcommand{\R}{\mathbb{R}}
\newcommand{\C}{\mathbb{C}}
\newcommand{\E}{\mathbb{E}}
\newcommand{\Prob}{\mathbb{P}}
\newcommand{\e}{\mathrm{e}}
\newcommand{\mat}[1]{\mathbf{#1}}

\newcommand{\col}{\mathord{:}}

\DeclareMathOperator{\diag}{diag}
\DeclareMathOperator{\rank}{rank}
\DeclareMathOperator{\tr}{tr}
\DeclareMathOperator*{\argmax}{arg\,max}

\DeclarePairedDelimiter{\abs}{\lvert}{\rvert}
\DeclarePairedDelimiter{\norm}{\lVert}{\rVert}

\numberwithin{equation}{section}

\theoremstyle{plain}
\newtheorem{theorem}{Theorem}[section]
\newaliascnt{lemma}{theorem}
\newtheorem{lemma}[lemma]{Lemma}
\aliascntresetthe{lemma}
\newaliascnt{proposition}{theorem}
\newtheorem{proposition}[proposition]{Proposition}
\aliascntresetthe{proposition}
\newaliascnt{corollary}{theorem}
\newtheorem{corollary}[corollary]{Corollary}
\aliascntresetthe{corollary}

\theoremstyle{definition}
\newaliascnt{definition}{theorem}

\aliascntresetthe{definition}

\theoremstyle{remark}
\newaliascnt{remark}{theorem}
\newtheorem{remark}[remark]{Remark}
\aliascntresetthe{remark}

\crefname{theorem}{theorem}{theorems}
\crefname{lemma}{lemma}{lemmas}
\crefname{proposition}{proposition}{propositions}
\crefname{corollary}{corollary}{corollaries}
\crefname{definition}{definition}{definitions}
\crefname{remark}{remark}{remarks}
\crefname{subsection}{subsection}{subsections}

\title{Convergence rates of randomly pivoted methods for low-rank approximation}
\author{Ryan Divan\thanks{Department of Mathematics, Princeton University,
Princeton, NJ 08544, United States. \href{mailto:rd2700@princeton.edu}{rd2700@princeton.edu},
\href{mailto:gilles@princeton.edu}{gilles@princeton.edu}.}
\and Marc Aur\`ele Gilles\footnotemark[1]}
\date{\today}

\begin{document}

\maketitle

\begin{abstract}
Randomly pivoted Cholesky, QR, and LU are iterative algorithms that form
structured low-rank approximations of a matrix by sampling columns, or
rows and columns, from the residual. We give convergence rates for these
methods that depend on the decay of the singular values of the original
matrix. The rates hold, up
to a constant, when pivots are drawn from an approximation to the exact
distribution.
This extra degree of freedom can be used to trade exact sampling for
cheaper approximations. We use it to establish rates for variants of
randomly pivoted LU, including one that picks pivots using a sketch of the input matrix.
\end{abstract}

\section{Introduction}
Pivoted Cholesky, QR, and LU are classical iterative methods for constructing
low-rank matrix approximations from selected pivot columns, or
pivot rows and columns. Traditionally, each iteration selects a pivot greedily
from the current residual: pivoted Cholesky and QR choose the largest diagonal
entry and column norm, respectively, while LU with complete pivoting chooses
the largest entry. More recently,
randomized variants have been introduced that sample pivots according to
these same quantities
\cite{FriezeKannanVempala2004,DeshpandeVempala2006,DeshpandeEtAl2006,
MuscoWoodruff2017,Poulson2020,ChenEtAl2025,GillesWilber2026}.\footnote{See
\cite[Section~3]{ChenEtAl2025} for a historical overview of
randomly pivoted QR and Cholesky.}
Randomized
pivoting can be more robust to outliers (e.g., a handful of large entries in the matrix) and can be considerably
cheaper when an exhaustive search over the residual is impractical.

In this work, we extend recently obtained convergence rates for greedy pivoting \cite{Gilles_convergence} to the randomized setting. 
Our analysis yields substantially stronger bounds for randomly pivoted LU
(RPLU) and its variants than the previous analysis~\cite{GillesWilber2026},
as well as a modest improvement over concurrent results for randomly pivoted
Cholesky (RPCholesky) and randomly pivoted QR
(RPQR)~\cite{Epperly2026}.
For all three algorithms, we obtain convergence rates that depend on the decay
of the spectrum. For example, if the singular values of the original matrix decay as $\mathcal{O}(\rho^k)$ for some $0 < \rho<1$, our bounds guarantee that the expected 
error decays as $\mathcal{O}(k\rho^{k/2})$ for RPCholesky and RPLU, and
$\mathcal{O}(\sqrt{k}\rho^{k/2})$ for RPQR.

The analysis also extends to approximate sampling, where pivots are drawn
from an approximation to the exact distribution, with only a constant-factor
loss in the convergence guarantees. This flexibility allows us to analyze
substantially more efficient pivoting schemes based on inexpensive estimates of
the residual row norms by relating them to RPLU.

The rest of the paper is organized as follows. After reviewing the necessary
background in~\cref{sec:background}, we analyze RPCholesky and RPQR
in~\cref{sec:qr-cholesky} and RPLU in~\cref{sec:lu}. We then apply the
approximate-pivoting framework to two efficient RPLU schemes
in~\cref{sec:efficient-rplu}.

\section{Background}
\label{sec:background}

\subsection{Notation}

Let $\mat{A}\in\C^{m\times n}$ have entries $\mat{A}_{ij}$. We use
MATLAB-style indexing, with a colon denoting all indices. Thus,
$\mat{A}_{\col,j}$ and $\mat{A}_{i,\col}$ denote the $j$th column and
$i$th row of $\mat{A}$, respectively. Let
$[n]:=\{1,\ldots,n\}$. For ordered index tuples
$\mathcal{I}\subset[m]$ and $\mathcal{J}\subset[n]$, we write
$\mat{A}_{\mathcal{I},\mathcal{J}}$ for the submatrix formed from the
corresponding rows and columns.

For a square matrix $\mat{H}\in\C^{n\times n}$, we write
$\mat{H}\succeq0$ to indicate that $\mat{H}$ is Hermitian positive semidefinite
(PSD). We denote the singular values of a general matrix
$\mat{A}\in\C^{m\times n}$ by
\[
  \sigma(\mat{A})
  :=
  \bigl(
    \sigma_1(\mat{A}),\ldots,
    \sigma_{\min\{m,n\}}(\mat{A})
  \bigr),
  \qquad
  \sigma_1(\mat{A})\geq\cdots\geq
  \sigma_{\min\{m,n\}}(\mat{A})\geq0.
\]
If $\mat{H}\succeq0$, its eigenvalues coincide with its singular values,
and we write
\[
  \lambda(\mat{H})
  :=
  \bigl(\lambda_1(\mat{H}),\ldots,\lambda_n(\mat{H})\bigr)
  =
  \sigma(\mat{H}).
\]

We use the Frobenius and nuclear norms
\[
  \norm{\mat{A}}_F
  :=
  \left(\sum_{i,j}\abs{\mat{A}_{ij}}^2\right)^{1/2},
  \qquad
  \norm{\mat{A}}_*
  :=
  \sum_j \sigma_j(\mat{A}).
\]
and recall that for $\mat{H} \succeq 0$, the nuclear norm is equal to its trace
$
  \norm{\mat{H}}_*
  =
  \tr\mat{H}
  :=
  \sum_i \mat{H}_{ii}.
$

Finally, we recall that the Eckart--Young--Mirsky theorem~\cite{Mirsky1960} states that,
for every $r<\min\{m,n\}$,
\[
  \min_{\rank(\mat{B})\leq r}
  \norm{\mat{A}-\mat{B}}_F
  =
  \left(\sum_{j>r}\sigma_j(\mat{A})^2\right)^{1/2},
  \qquad
  \min_{\rank(\mat{B})\leq r}
  \norm{\mat{A}-\mat{B}}_*
  =
  \sum_{j>r}\sigma_j(\mat{A}).
\]
Thus, the decay of the singular values provides a natural benchmark for
the quality of a low-rank approximation.

\subsection{Algorithms}
We state the three iterations producing low-rank approximations analyzed below.

\subsubsection*{Pivoted Cholesky}

Let $\mat{A}\in\C^{n\times n}$ be positive semidefinite.  Pivoted Cholesky
generates a sequence of approximants $\mat{\widehat A}^{(k)}$ and residuals
$\mat{A}^{(k)}$ through the iteration
\begin{equation}
\begin{aligned}
  \mat{A}^{(0)}&=\mat{A},
  &\mat{A}^{(k+1)}
  &=\mat{A}^{(k)}-
  \frac{\mat{A}_{\col,i_{k+1}}^{(k)}
        \mat{A}_{i_{k+1},\col}^{(k)}}
       {\mat{A}^{(k)}_{i_{k+1},i_{k+1}}},\\
  \mat{\widehat A}^{(0)}&=\mat{0},
  &\mat{\widehat A}^{(k+1)}
  &=\mat{\widehat A}^{(k)}+
  \frac{\mat{A}_{\col,i_{k+1}}^{(k)}
        \mat{A}_{i_{k+1},\col}^{(k)}}
       {\mat{A}^{(k)}_{i_{k+1},i_{k+1}}}.
\end{aligned}
\label{eq:cholesky-iteration}
\end{equation}
We refer to $i_{k+1}$ as the pivot index and
to
$\mat{A}^{(k)}_{i_{k+1},i_{k+1}}>0$ as the pivot.
Both the residuals $\mat{A}^{(k)}$ and approximants
$\mat{\widehat A}^{(k)}$ remain positive semidefinite. As long as the residual
is nonzero, each iteration increases the rank of the approximant by one and
decreases the rank of the residual by one. 
For all algorithms, if $\mat{A}^{(\ell)}=0$, we define
$
  \mat{A}^{(k)}=0,$
 and 
  $\mat{\widehat A}^{(k)}=\mat{A}$ for all 
  $k\geq\ell.$

The classical pivoted Cholesky algorithm is the greedy pivot choice
\cite{Higham1990}:
\[
  i_{k+1}\in\argmax_i \mat{A}^{(k)}_{ii}.
\]
The use of pivoted Cholesky for low-rank approximation has been studied
extensively; see, e.g.,~\cite{HarbrechtPetersSchneider2012,
CortinovisKressnerMassei2020,DeMarchiSchabackWendland2005,
SantinHaasdonk2017,JeongTownsend2025}.
Randomly pivoted Cholesky~\cite{ChenEtAl2025} instead samples
the pivot index from the distribution
\begin{equation}
\label{eq:rpcholesky-distribution}
  \Prob(i_{k+1}=i\mid \mat{A}^{(k)})
  =\frac{\mat{A}^{(k)}_{ii}}{\tr \mat{A}^{(k)}}.
\end{equation}

\subsubsection*{Pivoted QR}
For a general rectangular matrix $\mat{B}\in\C^{m\times n}$, pivoted QR generates a sequence of approximants $\mat{\widehat B}^{(k)}$ and residuals $\mat{B}^{(k)}$ through the iteration
\begin{equation}
\begin{aligned}
  \mat{B}^{(0)}&=\mat{B},
  &\mat{B}^{(k+1)}
  &=\mat{B}^{(k)}-
  \frac{\mat{B}_{\col,j_{k+1}}^{(k)}
        (\mat{B}_{\col,j_{k+1}}^{(k)})^{*}}
       {\norm{\mat{B}_{\col,j_{k+1}}^{(k)}}_2^2}
  \mat{B}^{(k)},\\
  \mat{\widehat B}^{(0)}&=\mat{0},
  &\mat{\widehat B}^{(k+1)}
  &=\mat{\widehat B}^{(k)}+
  \frac{\mat{B}_{\col,j_{k+1}}^{(k)}
        (\mat{B}_{\col,j_{k+1}}^{(k)})^{*}}
       {\norm{\mat{B}_{\col,j_{k+1}}^{(k)}}_2^2}
  \mat{B}^{(k)}.
\end{aligned}
\label{eq:qr-iteration}
\end{equation}
Here $j_{k+1}$ is the selected column pivot index.
The classical pivoted column QR algorithm is the greedy choice
\cite{BusingerGolub1965}:
\[
  j_{k+1}\in\argmax_j\norm{\mat{B}_{\col,j}^{(k)}}_2.
\]
Pivoted QR has a similarly long history as a method for low-rank approximation;
see, e.g.,~\cite{ChandrasekaranIpsen1994,KawamuraSuda2021,
BinevEtAl2011,DeVorePetrovaWojtaszczyk2013,AntilChenField2018}.
Randomly pivoted QR instead samples pivots from the distribution
\cite{FriezeKannanVempala2004,DeshpandeVempala2006,DeshpandeEtAl2006}
\begin{equation}
\label{eq:rpqr-distribution}
  \Prob(j_{k+1}=j\mid \mat{B}^{(k)})
  =\frac{\norm{\mat{B}_{\col,j}^{(k)}}_2^2}
         {\norm{\mat{B}^{(k)}}_F^2}.
\end{equation}

As with pivoted Cholesky, each step of the QR iteration
in~\cref{eq:qr-iteration} increases the rank of the approximant by one and
decreases the rank of the residual by one. The mechanisms are different:
Cholesky subtracts from the residual a rank-one outer product formed from the
pivot column and corresponding row, whereas QR projects the residual onto the
orthogonal complement of the pivot column. Nevertheless, the two algorithms
are intimately connected through the \textit{Gram correspondence}
\cite[Theorem~2.12]{Epperly2025},
which we recall below.

\begin{remark}[Gram correspondence]
\label{rem:gram-correspondence}
Let $\mat{B}\in\C^{m\times n}$ and let
$\mat{A}:=\mat{B}^{*}\mat{B}$ be its Gram matrix. Applying pivoted QR to
$\mat{B}$ and pivoted Cholesky to $\mat{A}$ with the same pivot sequence
then gives
\[
  \mat{A}^{(k)}=(\mat{B}^{(k)})^{*}\mat{B}^{(k)}.
\]

Furthermore,
\[
  \tr \mat{A}^{(k)}=\norm{\mat{B}^{(k)}}_F^2,
  \qquad
  \mat{A}^{(k)}_{jj}=\norm{\mat{B}_{\col,j}^{(k)}}_2^2.
\]
Thus, the sampling distributions of RPCholesky
in~\cref{eq:rpcholesky-distribution} and of RPQR
in~\cref{eq:rpqr-distribution} coincide under the Gram correspondence.
Therefore, any error bound for RPCholesky immediately gives the
corresponding result for RPQR.
\end{remark}

\subsubsection*{Pivoted LU}

Pivoted LU also applies to general rectangular matrices
$\mat{A}\in\C^{m\times n}$ but, like Cholesky, updates the residual by
subtracting a rank-one outer product rather than by orthogonal projection.
It generates a sequence of approximants $\mat{\widehat A}^{(k)}$ and
residuals $\mat{A}^{(k)}$ through the Gaussian elimination iteration
\begin{equation}
\begin{aligned}
  \mat{A}^{(0)}&=\mat{A},
  &\mat{A}^{(k+1)}
  &=\mat{A}^{(k)}-
  \frac{\mat{A}_{\col,j_{k+1}}^{(k)}
        \mat{A}_{i_{k+1},\col}^{(k)}}
       {\mat{A}_{i_{k+1},j_{k+1}}^{(k)}},\\
  \mat{\widehat A}^{(0)}&=\mat{0},
  &\mat{\widehat A}^{(k+1)}
  &=\mat{\widehat A}^{(k)}+
  \frac{\mat{A}_{\col,j_{k+1}}^{(k)}
        \mat{A}_{i_{k+1},\col}^{(k)}}
       {\mat{A}_{i_{k+1},j_{k+1}}^{(k)}}.
\end{aligned}
\label{eq:rplu-iteration}
\end{equation}
Here $(i_{k+1},j_{k+1})$ is the pivot location, and
$\mat{A}^{(k)}_{i_{k+1},j_{k+1}}\neq0$ is the pivot. 
For an initial positive semidefinite matrix $\mat{A}$, restricting the pivots to the diagonal
reduces the LU iteration to the Cholesky iteration
in~\cref{eq:cholesky-iteration}.

The classical LU with complete pivoting (CPLU) algorithm is the greedy
pivot choice \cite{Wilkinson1961}
\[
  (i_{k+1},j_{k+1})
  \in
  \argmax_{i,j}\abs{\mat{A}^{(k)}_{ij}}.
\]
The low-rank approximation quality of CPLU has been studied
in~\cite{Bebendorf2000,TownsendTrefethen2015,
CortinovisKressnerMassei2020,Gilles_convergence}.
Randomly pivoted LU~\cite{GillesWilber2026} instead samples the pivot
location from the distribution
\begin{equation}
\label{eq:rplu-distribution}
  \Prob((i_{k+1},j_{k+1})=(i,j)\mid \mat{A}^{(k)})
  =
  \frac{\abs{\mat{A}_{ij}^{(k)}}^2}
       {\norm{\mat{A}^{(k)}}_F^2}.
\end{equation}

\subsubsection*{Approximate sampling}

For all three algorithms, we also allow pivots to be sampled from approximations to the exact
distributions defined above. Let $\mathcal{P}$ denote the set of admissible
pivot indices: $\mathcal{P}=[n]$ for RPCholesky and RPQR, and
$\mathcal{P}=[m]\times[n]$ for RPLU. At iteration $k$, let
\[
\pi_p^{(k)}
:=
\Prob\bigl(p_{k+1}=p\mid\mat{R}^{(k)}\bigr)
\]
denote the corresponding exact sampling probability
in~\cref{eq:rpcholesky-distribution,eq:rpqr-distribution,eq:rplu-distribution}.
Conditional on the current residual $\mat{R}^{(k)}$, let $q_p^{(k)}$ be another probability distribution on $\mathcal{P}$:
\[
  q_{p}^{(k)}\geq0,
  \qquad
  \sum_{p \in \mathcal{P}}q_{p}^{(k)}=1.
\]
For $0<\gamma\leq1$, we call the distribution
$q^{(k)}$ a $\gamma$-approximate sampling
distribution if
\begin{equation}
\label{eq:approximate-sampling}
  q_{p}^{(k)}
  \leq
  \frac{1}{\gamma}\pi_p^{(k)}
  \qquad
  \text{for every }p\in\mathcal{P}.
\end{equation}
Exact sampling
corresponds to $\gamma=1$.

\subsection{Determinant identities and elementary symmetric polynomials}

The analysis of all three algorithms relies on two classical determinant
identities. The first relates the successive LU pivots to a minor of the original
matrix; see, e.g.,~\cite[Equation~(4.2)]{Wilkinson1961}. The same identity
underlies the analysis of greedy pivoting in~\cite{Gilles_convergence}.

\begin{lemma}[Pivot-product identity]
\label{lem:pivot-product}
Let $\mat{A}^{(\ell)}$ be the residuals generated by the pivoted LU
iteration~\eqref{eq:rplu-iteration} using the pivot sequence
$(i_\ell,j_\ell)$, $\ell=1,\ldots,k$. Set $\mathcal{I}:=(i_1,\ldots,i_k)$ and
$\mathcal{J}:=(j_1,\ldots,j_k)$. Then,
\[
  \det\mat{A}_{\mathcal{I},\mathcal{J}}
  =
  \prod_{\ell=1}^k
  \mat{A}^{(\ell-1)}_{i_\ell,j_\ell}.
\]
\end{lemma}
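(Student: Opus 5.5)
We argue by induction on $k$, tracking how a single elimination step acts on minors that contain the pivot row and column. The one fact we need is this: if $\mat{R}$ is any matrix, $(i,j)$ is a location with $\mat{R}_{ij}\neq0$, and
\[
  \mat{R}'=\mat{R}-\mat{R}_{\col,j}\mat{R}_{i,\col}/\mat{R}_{ij},
\]
then for any ordered tuples $\mathcal{I}'$ and $\mathcal{J}'$ of length $r$ not containing $i$ and $j$ respectively,
\[
  \det\mat{R}_{(i,\mathcal{I}'),(j,\mathcal{J}')}
  = \mat{R}_{ij}\,\det\mat{R}'_{\mathcal{I}',\mathcal{J}'}.
\]
To see this, observe that row $i$ of $\mat{R}'$ and column $j$ of $\mat{R}'$ vanish. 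Also, $\mat{R}'$ is obtained from $\mat{R}$ by subtracting multiples of row $i$ from every other row. These row operations leave the determinant of the $(r+1)\times(r+1)$ submatrix with rows $(i,\mathcal{I}')$ and columns $(j,\mathcal{J}')$ unchanged. After the operations, the first column of that submatrix is $(\mat{R}_{ij},0,\ldots,0)^{\mathsf T}$, so expanding along it gives $\mat{R}_{ij}$ times the minor $\det\mat{R}'_{\mathcal{I}',\mathcal{J}'}$. Because row $i$ and column $j$ are placed first in both tuples, no sign appears.

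With this in hand, the induction is short. Define $\mathcal{I}_\ell:=(i_\ell,\ldots,i_k)$ and $\mathcal{J}_\ell:=(j_\ell,\ldots,j_k)$. We claim that
\[
  \det\mat{A}^{(\ell-1)}_{\mathcal{I}_\ell,\mathcal{J}_\ell}=\prod_{s=\ell}^k\mat{A}^{(s-1)}_{i_s,j_s}
\]
for each $\ell$, and we prove it by downward induction on $\ell$.

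The base case $\ell=k$ is a $1\times1$ minor, so it holds trivially. For the inductive step, apply the fact with $\mat{R}=\mat{A}^{(\ell-1)}$ and $(i,j)=(i_\ell,j_\ell)$. Then $\mat{R}'=\mat{A}^{(\ell)}$, and the fact gives
\[
  \det\mat{A}^{(\ell-1)}_{\mathcal{I}_\ell,\mathcal{J}_\ell}
  =\mat{A}^{(\ell-1)}_{i_\ell,j_\ell}\det\mat{A}^{(\ell)}_{\mathcal{I}_{\ell+1},\mathcal{J}_{\ell+1}},
\]
and the inductive hypothesis finishes the step. Taking $\ell=1$ gives exactly the statement of \cref{lem:pivot-product}.

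One hypothesis of the fact needs checking: $i$ must not appear in $\mathcal{I}'$ and $j$ must not appear in $\mathcal{J}'$. Pivot indices cannot repeat, because the row and column of an earlier pivot are identically zero in all later residuals, while every pivot is required to be nonzero. Hence the tuples consist of distinct indices and the hypothesis holds.

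The only step that needs care is the sign and ordering bookkeeping in the fact, namely that the pivot appears as the first row and column of each minor, so that no permutation sign enters. Ordering the tuples as $(i_\ell,\ldots,i_k)$ and $(j_\ell,\ldots,j_k)$, in pivot order, ensures this.
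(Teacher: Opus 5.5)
Your proof is correct. The paper gives no argument for \cref{lem:pivot-product}; it only cites the classical identity \cite[Equation~(4.2)]{Wilkinson1961}, so you are supplying a self-contained proof where the paper has a reference.

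The mechanism is the classical one: subtracting multiples of the pivot row preserves the relevant minor, and the pivot then factors out. You organize it as a one-step peeling fact followed by downward induction over the intermediate residuals. This fits the paper's convention in~\eqref{eq:rplu-iteration}, where the pivot row itself is zeroed. Because of that convention, $\mat{A}^{(\ell)}$ is not row-equivalent to $\mat{A}$, so one cannot simply say that elimination preserves $\det\mat{A}_{\mathcal{I},\mathcal{J}}$. Your fact handles this correctly by taking the pivot row from $\mat{R}$ and the remaining rows from $\mat{R}'$.

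The natural alternative is an upward induction,
$\det\mat{A}_{(i_1,\ldots,i_\ell),(j_1,\ldots,j_\ell)}=\mat{A}^{(\ell-1)}_{i_\ell,j_\ell}\det\mat{A}_{(i_1,\ldots,i_{\ell-1}),(j_1,\ldots,j_{\ell-1})}$, obtained from the Schur-complement determinant formula. That route works only with minors of $\mat{A}$. However, it first requires identifying $\mat{A}^{(\ell-1)}$ as the Schur complement of $\mat{A}_{(i_1,\ldots,i_{\ell-1}),(j_1,\ldots,j_{\ell-1})}$ in $\mat{A}$, and your route avoids that step.

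Your distinctness check is fine, though the one-step fact does not need it: if $i$ or $j$ were repeated, both sides would vanish.
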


To state the second identity, define the degree-$k$ elementary symmetric
polynomial by
\[
  e_k(\lambda_1,\ldots,\lambda_n)
  :=
  \sum_{1\leq i_1<\cdots<i_k\leq n}
  \prod_{\ell=1}^k \lambda_{i_\ell}.
\]
If $\mat{H}\in\C^{n\times n}$ has eigenvalues
$\lambda_1(\mat{H}),\ldots,\lambda_n(\mat{H})$, we write
\[
  e_k(\lambda(\mat{H}))
  :=
  e_k\bigl(
    \lambda_1(\mat{H}),\ldots,\lambda_n(\mat{H})
  \bigr).
\]
Elementary symmetric polynomials arise naturally in determinant identities;
see, e.g.,~\cite[Section~1.2]{HornJohnson2013}.

The second identity relates these polynomials to the principal minors of a
matrix~\cite[Theorem~1.2.16]{HornJohnson2013}.

\begin{lemma}[Principal-minor identity]
\label{lem:principal-minor-identity}
Let $\mat{H}\in\C^{n\times n}$. For every $k\in\{0,\ldots,n\}$,
\[
  e_k(\lambda(\mat{H}))
  =
  \sum_{\substack{I\subseteq[n]\\|I|=k}}
  \det\mat{H}_{I,I}.
\]
\end{lemma}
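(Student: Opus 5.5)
The statement to prove is the principal-minor identity, \cref{lem:principal-minor-identity}: $e_k(\lambda(\mat{H}))$ equals the sum of the $k\times k$ principal minors of $\mat{H}$. I would compare coefficients of the characteristic polynomial computed in two ways. Introduce a scalar variable $t$ and consider $p(t):=\det(t\mat{I}+\mat{H})$.

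First, factor over the eigenvalues. Since $\C$ is algebraically closed, $\mat{H}$ has a Schur (or Jordan) triangularization $\mat{H}=\mat{U}\mat{T}\mat{U}^{-1}$ with $\mat{T}$ upper triangular and diagonal $\lambda_1(\mat{H}),\ldots,\lambda_n(\mat{H})$ listed with algebraic multiplicity. Then
\[
p(t)=\det(t\mat{I}+\mat{T})=\prod_{i=1}^n\bigl(t+\lambda_i(\mat{H})\bigr).
\]
Expanding the product shows that the coefficient of $t^{n-k}$ is $e_k(\lambda(\mat{H}))$.

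Second, expand $\det(t\mat{I}+\mat{H})$ directly using multilinearity in the columns. Column $j$ of $t\mat{I}+\mat{H}$ is $t\,\mathbf{e}_j+\mat{H}_{\col,j}$. Expanding column by column gives a sum over subsets $I\subseteq[n]$, where $I$ indexes the columns taken from $\mat{H}$ and the remaining columns are $t\,\mathbf{e}_j$. Each such term equals $t^{n-|I|}\det\mat{H}_{I,I}$. To see this, note that the $t\,\mathbf{e}_j$ columns for $j\notin I$ can be removed by cofactor (Laplace) expansion along those columns. Each removal contributes a factor $t$ and deletes row $j$ and column $j$ together. Because the deleted row and column share the same index, the sign is $(-1)^{j+j}=+1$, and what remains is the principal submatrix $\mat{H}_{I,I}$. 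Collecting terms with $|I|=k$ shows that the coefficient of $t^{n-k}$ is $\sum_{|I|=k}\det\mat{H}_{I,I}$.

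Equating the two coefficients of $t^{n-k}$ proves the identity for every $k\in\{0,\ldots,n\}$. For $k=0$ both sides equal $1$ under the empty-product and empty-determinant conventions.

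The only step requiring care is the sign bookkeeping in the multilinear expansion. It is handled by the observation that the deleted rows and columns coincide, so every cofactor sign is $+1$.
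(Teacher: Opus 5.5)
Your proof is correct: expanding $\det(t\mat{I}+\mat{H})$ once through triangularization as $\prod_i\bigl(t+\lambda_i(\mat{H})\bigr)$ and once through column multilinearity, then matching the coefficients of $t^{n-k}$, is the standard characteristic-polynomial argument, and your observation that every cofactor sign is $+1$ handles the sign bookkeeping. The paper gives no proof of its own and simply cites \cite[Theorem~1.2.16]{HornJohnson2013}, which rests on this same comparison of the two expansions of the characteristic polynomial.
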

This identity also plays an important role in the analysis of volume sampling
and DPP-based low-rank approximation
\cite{DeshpandeEtAl2006,GuruswamiSinop2012,LiJegelkaSra2016}. Both determinant
identities are also used in the concurrent work~\cite{Epperly2026}, which
establishes oversampled bounds for RPCholesky (and thus RPQR).

\section{Convergence rates for RPCholesky and RPQR}
\label{sec:qr-cholesky}

We begin by combining the two determinant identities above to bound the
expected RPCholesky error in terms of $e_k(\lambda(\mat{A}))$. We will then obtain rates by
estimating this quantity under algebraic and geometric eigenvalue decay.

\begin{theorem}[Approximate RPCholesky expected residual trace]
\label{lem:rpchol-det-moment}
Let $\mat{A}^{(0)}=\mat{A}\succeq0$, and let $\mat{A}^{(\ell)}$ be the
residuals generated by $\gamma$-approximate RPCholesky. Then, for every
integer $k\geq1$,
\[
  \E[\tr \mat{A}^{(k-1)}]\leq \gamma^{-1}\bigl(k!\,e_k(\lambda(\mat{A}))\bigr)^{1/k}.
\]
\end{theorem}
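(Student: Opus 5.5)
The strategy is to bound the expected product of the residual traces along the first $k$ steps and then pass to $\E[\tr\mat{A}^{(k-1)}]$ using monotonicity and the AM--GM inequality. Write $\tau_\ell:=\tr\mat{A}^{(\ell)}$. Each Cholesky step subtracts a PSD rank-one matrix, so the residuals are decreasing in the Loewner order and $\tau_0\geq\tau_1\geq\cdots$. Hence, pathwise,
\[
  \tau_{k-1}^k\leq\prod_{\ell=0}^{k-1}\tau_\ell .
\]
By Jensen's inequality, $\E[\tau_{k-1}]\leq(\E[\tau_{k-1}^k])^{1/k}$. It therefore suffices to show
\[
  \E\Bigl[\prod_{\ell=0}^{k-1}\tau_\ell\Bigr]\leq\gamma^{-k}\,k!\,e_k(\lambda(\mat{A})),
\]
since the $k$th root of the right-hand side gives exactly the claimed $\gamma^{-1}(k!\,e_k)^{1/k}$.

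To prove this expectation bound, I would expand it over pivot sequences $(i_1,\ldots,i_k)$. The probability of a given sequence is $\prod_{\ell=1}^k q^{(\ell-1)}_{i_\ell}$. By \eqref{eq:approximate-sampling} this is at most
\[
  \gamma^{-k}\prod_{\ell=1}^k\frac{\mat{A}^{(\ell-1)}_{i_\ell i_\ell}}{\tau_{\ell-1}}.
\]
Multiplying by $\prod_{\ell}\tau_{\ell-1}$ cancels the denominators, leaving $\gamma^{-k}\prod_\ell\mat{A}^{(\ell-1)}_{i_\ell i_\ell}$. By \cref{lem:pivot-product} with $\mathcal{I}=\mathcal{J}=(i_1,\ldots,i_k)$, this product equals $\det\mat{A}_{\mathcal{I},\mathcal{I}}$. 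The residuals along a sequence are deterministic functions of that sequence, so summing gives
\[
  \E\Bigl[\prod_\ell\tau_\ell\Bigr]\leq\gamma^{-k}\sum_{(i_1,\ldots,i_k)}\det\mat{A}_{\mathcal{I},\mathcal{I}}.
\]
Ordered tuples with repeated indices have zero principal minor, and each $k$-subset appears $k!$ times with the same determinant, because simultaneous row and column permutation preserves it. By \cref{lem:principal-minor-identity} the sum is therefore $k!\,e_k(\lambda(\mat{A}))$.

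The main obstacle is making the expansion rigorous in degenerate cases.
\begin{itemize}
  \item If the residual vanishes at some step $\ell<k$, the process stops: $\tau_\ell=0$ and the product is zero. Such paths contribute nothing and may simply be dropped.
  \item Under approximate sampling, $q$ may place mass on an index with $\mat{A}^{(\ell)}_{ii}=0$ only if $\pi_i=0$. But \eqref{eq:approximate-sampling} forces $q_i=0$ there, so every sampled pivot is positive and the iteration is well defined.
\end{itemize}
With these cases handled, the restricted sum over nondegenerate sequences is bounded by the full sum over all ordered tuples, since every principal minor of a PSD matrix is nonnegative. This yields the inequality.
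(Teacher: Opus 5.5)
Your proposal is correct and follows essentially the same argument as the paper: expand $\E\bigl[\prod_{\ell=0}^{k-1}\tr\mat{A}^{(\ell)}\bigr]$ over pivot paths, cancel the traces against the $\gamma$-approximate sampling bound, and apply the pivot-product and principal-minor identities to get $\gamma^{-k}k!\,e_k(\lambda(\mat{A}))$, then conclude via monotonicity of the trace and Jensen. The differences are cosmetic: you bound the sum over realizable paths by the sum over all ordered $k$-tuples (using nonnegativity of PSD principal minors) where the paper counts at most $k!$ paths per subset, and you spell out the degenerate cases that the paper leaves implicit.
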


\begin{proof}
Let $\mathcal{L}_k$ denote the set of all ordered pivot paths
\[
  \mathcal{I}=(i_1,\ldots,i_k),
\]
where $i_\ell \in [n]$ and
$\mat{A}^{(\ell-1)}_{i_\ell,i_\ell}>0$ for $\ell=1,\ldots,k$.
The indices in each path are distinct, since a selected index has zero
diagonal entry at every subsequent iteration.

Fix $\mathcal{I} \in \mathcal{L}_k$. The RPCholesky rule
in~\cref{eq:rpcholesky-distribution} and the approximate sampling condition
in~\cref{eq:approximate-sampling} give
\[
  \Prob(\mathcal{I})
  \leq \gamma^{-k}
  \prod_{\ell=1}^k
  \frac{\mat{A}^{(\ell-1)}_{i_\ell,i_\ell}}
       {\tr\mat{A}^{(\ell-1)}}.
\]
Therefore
\[
\begin{aligned}
  \E\left[
    \prod_{\ell=0}^{k-1}\tr\mat{A}^{(\ell)}
  \right]
  &=\sum_{\mathcal{I} \in \mathcal{L}_k}
    \Prob(\mathcal{I})
    \prod_{\ell=0}^{k-1}\tr\mat{A}^{(\ell)}\\
  &\leq \gamma^{-k}
    \sum_{\mathcal{I} \in \mathcal{L}_k}
    \left(
      \prod_{\ell=1}^k
      \frac{\mat{A}^{(\ell-1)}_{i_\ell,i_\ell}}
           {\tr\mat{A}^{(\ell-1)}}
    \right)
    \left(
      \prod_{\ell=0}^{k-1}\tr\mat{A}^{(\ell)}
    \right)\\
  &=\gamma^{-k}
    \sum_{\mathcal{I} \in \mathcal{L}_k}
    \prod_{\ell=1}^k
    \mat{A}^{(\ell-1)}_{i_\ell,i_\ell}\\
  &=\gamma^{-k}
    \sum_{\mathcal{I} \in \mathcal{L}_k}
    \det \mat{A}_{\mathcal{I},\mathcal{I}},
\end{aligned}
\]
where the last line follows from~\cref{lem:pivot-product}.

Let $I=\{i_1,\ldots,i_k\}\subset[n]$ be the unordered set associated with
the ordered path $\mathcal{I}\in\mathcal{L}_k$. For each such set $I$, there
are at most $k!$ corresponding paths in $\mathcal{L}_k$, all satisfying
$\det \mat{A}_{\mathcal{I},\mathcal{I}}=\det \mat{A}_{I,I}$. Thus,

\[
\begin{aligned}
  \E\left[
    \prod_{\ell=0}^{k-1}\tr\mat{A}^{(\ell)}
  \right]
  & \le \gamma^{-k} \sum_{\mathcal{I} \in \mathcal{L}_k}\det \mat{A}_{\mathcal{I},\mathcal{I}}\\
  & \le \gamma^{-k} k! \sum_{\substack{I\subseteq[n]\\|I|=k}}\det \mat{A}_{I,I}\\
  & = \gamma^{-k} k!\,e_k(\lambda(\mat{A})),
\end{aligned}
\]
where the last line follows from \cref{lem:principal-minor-identity}.
For any pivot sequence,
$\tr \mat{A}^{(\ell+1)} \leq \tr \mat{A}^{(\ell)}$, thus,
\begin{equation}
\label{eq:rpchol-geometric-mean}
  \bigl(\tr\mat{A}^{(k-1)}\bigr)^k
  \leq
  \prod_{\ell=0}^{k-1}\tr\mat{A}^{(\ell)}.
\end{equation}
Finally, convexity of $x\mapsto x^k$ and Jensen's inequality give
\[
\begin{aligned}
  \bigl(\E[\tr\mat{A}^{(k-1)}]\bigr)^k
  &\leq
  \E\left[\bigl(\tr\mat{A}^{(k-1)}\bigr)^k\right]\\
  &\leq
  \E\left[
    \prod_{\ell=0}^{k-1}\tr\mat{A}^{(\ell)}
  \right]\\
  &\leq \gamma^{-k} k!\,e_k(\lambda(\mat{A})).
\end{aligned}
\]
Taking the $k$th root proves the claim.
\end{proof}

To use the pivot-product identity, we make one crude estimate
in~\cref{eq:rpchol-geometric-mean}: we effectively bound the final residual trace by the
geometric mean of the first $k$ residual traces.
The advantage is that Jensen's inequality is applied only once, rather than
at every iteration as in the original analysis of RPCholesky
in~\cite{ChenEtAl2025}. The resulting iterated one-step
bound cannot capture rapid geometric convergence: it remains subgeometric
regardless of how quickly the singular values decay; see
\cite[Section~3.3]{GillesWilber2026}. This limitation largely accounts for
the gap between the rates in the original analysis of
RPCholesky~\cite{ChenEtAl2025} and those established here and in the more
recent analysis~\cite{Epperly2026}.

To get error estimates from \cref{lem:rpchol-det-moment} under singular value decay,
we will use bounds on the corresponding decay of elementary symmetric polynomials.

\begin{lemma}[Estimates of elementary symmetric polynomials under decay]
\label{lem:esp-decay}
Let $(\lambda_j)_{j=1}^n$ be a nonnegative sequence and $1\leq k\leq n$. If
$\lambda_j\leq Cj^{-\alpha}$ for $\alpha>1$, then
\[
  e_k(\lambda_1,\ldots,\lambda_n)^{1/k}
  \leq
  C\left(\frac{\pi\e}{\alpha k\sin(\pi/\alpha)}\right)^\alpha.
\]
If instead $\lambda_j\leq C\rho^{j-1}$ for $0<\rho<1$, then
\[
  e_k(\lambda_1,\ldots,\lambda_n)^{1/k}
  \leq
  \frac{C}{1-\rho}\rho^{(k-1)/2}.
\]
\end{lemma}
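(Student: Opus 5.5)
The plan is to handle both parts by the same two reductions. First, $e_k$ has nonnegative coefficients, so it is nondecreasing in each argument on the nonnegative orthant. Second, appending further nonnegative entries to the sequence can only increase $e_k$. Hence it suffices to bound $e_k$ of the infinite comparison sequences $(Cj^{-\alpha})_{j\geq1}$ and $(C\rho^{j-1})_{j\geq1}$, both of which are summable. Homogeneity, $e_k(C\lambda)=C^k e_k(\lambda)$, then lets us take $C=1$.

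\textbf{Algebraic decay.} I will use the generating function $\prod_{j\geq1}(1+t\lambda_j)=\sum_{k\geq0}e_k(\lambda)t^k$. All terms are nonnegative, so for every $t>0$,
\[
  e_k\leq t^{-k}\prod_{j\geq1}\bigl(1+tj^{-\alpha}\bigr)
  =t^{-k}\exp\Bigl(\sum_{j\geq1}\log\bigl(1+tj^{-\alpha}\bigr)\Bigr).
\]
The summand $x\mapsto\log(1+tx^{-\alpha})$ is decreasing on $(0,\infty)$. Therefore
\[
  \sum_{j\geq1}\log\bigl(1+tj^{-\alpha}\bigr)\leq\int_0^\infty\log\bigl(1+tx^{-\alpha}\bigr)\,dx .
\]
Substituting $x=t^{1/\alpha}y$ turns this into $t^{1/\alpha}\int_0^\infty\log(1+y^{-\alpha})\,dy$. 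I will evaluate the remaining integral by integrating by parts, which gives $\int_0^\infty \alpha/(1+y^{\alpha})\,dy$. The standard Beta-integral formula then gives $\pi/\sin(\pi/\alpha)$; this step needs $\alpha>1$.

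Writing $s=t^{1/\alpha}$, we get
\[
  e_k\leq s^{-\alpha k}\exp\bigl(s\pi/\sin(\pi/\alpha)\bigr).
\]
Minimizing over $s$ gives $s=\alpha k\sin(\pi/\alpha)/\pi$. Taking $k$th roots yields
\[
  e_k^{1/k}\leq s^{-\alpha}\e^{\alpha}=\Bigl(\frac{\pi\e}{\alpha k\sin(\pi/\alpha)}\Bigr)^\alpha ,
\]
which is exactly the claimed bound.

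\textbf{Geometric decay.} I will use the classical $q$-binomial identity
\[
  e_k(1,\rho,\rho^2,\ldots)=\frac{\rho^{k(k-1)/2}}{\prod_{i=1}^k(1-\rho^i)} .
\]
One way to prove it is to substitute $i_\ell=(\ell-1)+m_\ell$ with $0\leq m_1\leq\cdots\leq m_k$ and sum a partition generating function. Alternatively, it follows from Euler's product $\prod_{j\geq0}(1+t\rho^j)$ by comparing coefficients in the functional equation $F(t)=(1+t)F(\rho t)$. Since $1-\rho^i\geq1-\rho$, this gives $e_k\leq\rho^{k(k-1)/2}(1-\rho)^{-k}$. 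Taking $k$th roots gives $\rho^{(k-1)/2}/(1-\rho)$, as claimed.

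\textbf{Main obstacle.} The only nonroutine step is the exact evaluation $\int_0^\infty\log(1+y^{-\alpha})\,dy=\pi/\sin(\pi/\alpha)$, together with checking that the integral comparison is legitimate. The latter holds because the summand is monotone and integrable near $0$ (logarithmic singularity) and near $\infty$ (since $\alpha>1$). Everything else is bookkeeping.
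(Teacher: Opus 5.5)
Your proposal is correct and follows essentially the same route as the paper. For algebraic decay, both use monotonicity of $e_k$, the generating-function bound $e_k\leq t^{-k}\prod_j(1+tj^{-\alpha})$, an integral comparison, and optimization over $t$; for geometric decay, both use the $q$-binomial identity together with $1-\rho^i\geq1-\rho$. The only differences are that you derive $\int_0^\infty\log(1+y^{-\alpha})\,dy=\pi/\sin(\pi/\alpha)$ by integration by parts and the $q$-binomial identity from Euler's product, whereas the paper simply cites both.
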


The proof is deferred to~\cref{app:esp-decay}.
Using these estimates in~\cref{lem:rpchol-det-moment} gives the following
rates.
\begin{corollary}[Convergence rates for approximate RPCholesky]
\label{cor:rpchol-rates}
Let $\gamma$-approximate RPCholesky start from $\mat{A}=\mat{A}^{(0)} \succeq 0$. If $\lambda_j(\mat{A})\leq Cj^{-\alpha}$ for $\alpha>1$, then
\[
  \E\tr \mat{A}^{(k)}
  \leq
  \gamma^{-1}C\left(\frac{\pi\e}{\alpha\sin(\pi/\alpha)}\right)^\alpha
  (k+1)^{1-\alpha}.
\]
If instead $\lambda_j(\mat{A})\leq C\rho^{j-1}$ for $0<\rho<1$, then
\[
  \E\tr \mat{A}^{(k)}
  \leq
  \gamma^{-1}\frac{C(k+1)}{1-\rho}\rho^{k/2}.
\]
\end{corollary}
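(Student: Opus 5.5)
The plan is to apply \cref{lem:rpchol-det-moment} with $k+1$ in place of $k$, which bounds $\E[\tr\mat{A}^{(k)}]$. We then control the factor $\bigl((k+1)!\bigr)^{1/(k+1)}$ and the quantity $e_{k+1}(\lambda(\mat{A}))^{1/(k+1)}$ separately, the latter via \cref{lem:esp-decay}. Concretely, the theorem gives
\[
  \E\tr\mat{A}^{(k)}\leq \gamma^{-1}\bigl((k+1)!\bigr)^{1/(k+1)}\,e_{k+1}(\lambda(\mat{A}))^{1/(k+1)}.
\]
For the factorial factor we use the crude bound $(k+1)!\leq (k+1)^{k+1}$, which gives $\bigl((k+1)!\bigr)^{1/(k+1)}\leq k+1$. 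If $k+1>n$, then $e_{k+1}=0$ and $\mat{A}^{(k)}=0$ anyway, so the bound is trivial and we may assume $k+1\leq n$, as \cref{lem:esp-decay} requires.

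In the algebraic case, \cref{lem:esp-decay} with $k+1$ in place of $k$ yields
\[
  e_{k+1}^{1/(k+1)}\leq C\Bigl(\frac{\pi\e}{\alpha\sin(\pi/\alpha)}\Bigr)^{\alpha}(k+1)^{-\alpha}.
\]
Multiplying by $k+1$ gives exactly $(k+1)^{1-\alpha}$ times the stated constant, times $\gamma^{-1}$. In the geometric case the lemma gives $e_{k+1}^{1/(k+1)}\leq \frac{C}{1-\rho}\rho^{k/2}$. Multiplying by $k+1$ gives the stated bound $\gamma^{-1}\frac{C(k+1)}{1-\rho}\rho^{k/2}$.

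There is no real obstacle: the only care needed is the index shift (residual $\mat{A}^{(k)}$ corresponds to $k+1$ in \cref{lem:rpchol-det-moment}) and the trivial case $k\geq n$. The factorial bound $(k+1)!\le(k+1)^{k+1}$ is loose by roughly a factor of $\e$, but it matches the constants as stated, so no Stirling refinement is needed.
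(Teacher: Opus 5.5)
Your proposal is correct and follows the paper's proof: apply \cref{lem:rpchol-det-moment} at index $k+1$, bound $((k+1)!)^{1/(k+1)}\leq k+1$, and invoke \cref{lem:esp-decay} with $k+1$ in place of $k$. Your explicit treatment of the index shift and of the trivial case $k\geq n$ adds detail that the paper leaves implicit.
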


\begin{proof}
Apply \cref{lem:esp-decay} to the right hand-side in~\cref{lem:rpchol-det-moment}  and use
$((k+1)!)^{1/(k+1)}\leq k+1$.
\end{proof}

In~\cref{fig:rpchol-bounds}, we compare our bounds with the oversampled
$(r,\varepsilon)$ bound of~\cite{Epperly2026} and the doubling bound
of~\cite[Lemma~5.5]{ChenEtAl2025} under different decay scenarios, though we
note that the oversampled bound of~\cite{Epperly2026} is more general, and does not require any assumption on spectral
decay.

Under algebraic decay, the oversampled bound
agrees with our bound up to logarithmic factors. When the eigenvalues decay
as $\mathcal{O}(\rho^j)$, our bound is sharper. For example, taking
$\varepsilon\rightarrow1$, the
oversampled bound requires $k\geq6r$ once $r\geq10$,
corresponding at best to a decay rate of roughly
$\mathcal{O}(\rho^{k/6})$. In comparison, \cref{cor:rpchol-rates} gives
$\mathcal{O}(k\rho^{k/2})$. For very rapid spectral decay, however, the
doubling bound can be the strongest of the three, as seen in the
fastest-decay examples in~\cref{fig:rpchol-bounds}.

\begin{figure}[H]
  \centering
  \includegraphics[width=\linewidth]{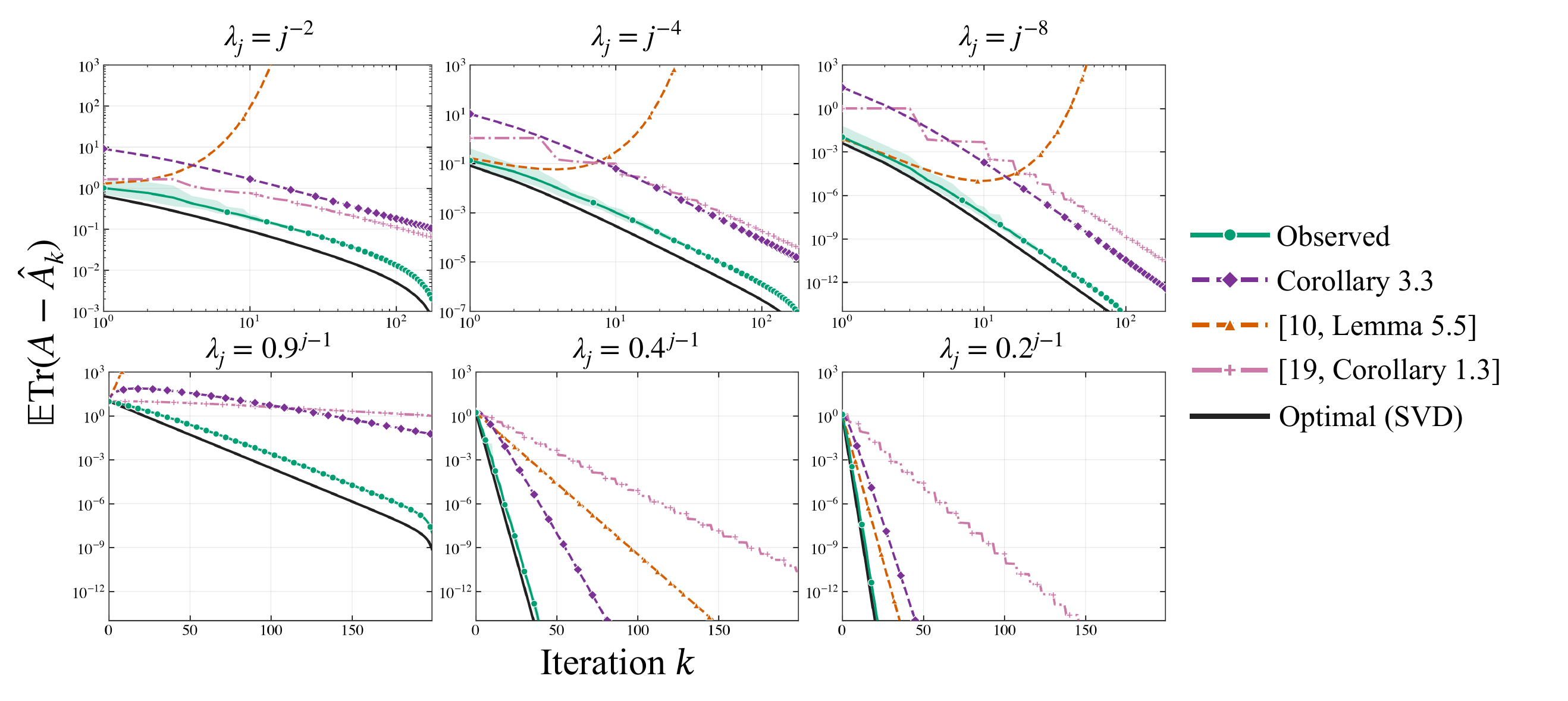}
  \caption{Comparison of the RPCholesky bound in~\cref{cor:rpchol-rates}
  with the bounds in~\cite[Lemma~5.5]{ChenEtAl2025} and
  \cite[Corollary~1.3]{Epperly2026} on random $200\times200$ PSD matrices. The top row uses
  algebraic decay $\lambda_j=j^{-\alpha}$ with
  $\alpha\in\{2,4,8\}$, and the bottom row uses geometric decay
  $\lambda_j=\rho^{j-1}$ with $\rho\in\{0.9,0.4,0.2\}$. For each decay
  profile, $\mat{A}=\mat{U}\diag(\lambda_1,\ldots,\lambda_{200})
  \mat{U}^{\top}$, where $\mat{U}$ is obtained from the QR factorization
  of a standard Gaussian matrix. The bound of~\cite{Epperly2026} is minimized over
  $(r,\varepsilon)$ by a grid search at each iteration. The observed curve
  is the mean of 30 RPCholesky trials with exact sampling ($\gamma=1$),
  with the minimum and maximum shaded; the black curve is the optimal
  rank-$k$ residual trace. }
  \label{fig:rpchol-bounds}

\end{figure}
The Gram correspondence immediately gives the corresponding decay rates for
$\gamma$-approximate RPQR.

\begin{corollary}[Convergence rates for approximate RPQR]
\label{cor:rpqr-rates}
Let $\gamma$-approximate RPQR start from $\mat{B}=\mat{B}^{(0)} \in \C^{m\times n}$. If $\sigma_j(\mat{B})\leq Cj^{-\alpha}$ for $\alpha>1/2$, then
\[
  \E\norm{\mat{B}^{(k)}}_F^2
  \leq
  \gamma^{-1}C^2\left(\frac{\pi\e}{2\alpha\sin(\pi/(2\alpha))}\right)^{2\alpha}
  (k+1)^{1-2\alpha}.
\]
If instead $\sigma_j(\mat{B})\leq C\rho^{j-1}$ for $0<\rho<1$, then
\[
  \E\norm{\mat{B}^{(k)}}_F^2
  \leq
  \gamma^{-1}\frac{C^2(k+1)}{1-\rho^2}\rho^k.
\]
\end{corollary}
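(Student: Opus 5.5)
The plan is to reduce the statement to \cref{cor:rpchol-rates} through the Gram correspondence of \cref{rem:gram-correspondence}. Set $\mat{A}:=\mat{B}^{*}\mat{B}\succeq0$. If RPQR on $\mat{B}$ and RPCholesky on $\mat{A}$ use the same pivot sequence, then $\tr\mat{A}^{(k)}=\norm{\mat{B}^{(k)}}_F^2$ and $\mat{A}^{(k)}_{jj}=\norm{\mat{B}^{(k)}_{\col,j}}_2^2$. Consequently the exact pivot distributions \cref{eq:rpcholesky-distribution} and \cref{eq:rpqr-distribution} coincide at every step.

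The first point to check is that approximate sampling also transfers. A $\gamma$-approximate RPQR distribution $q^{(k)}$ satisfies $q^{(k)}_j\le\gamma^{-1}\pi^{(k)}_j$. Here $\pi^{(k)}_j$ is the same number for both algorithms, because it is a function of $\mat{B}^{(k)}$ only through $(\mat{B}^{(k)})^*\mat{B}^{(k)}=\mat{A}^{(k)}$. So the induced process on $\mat{A}^{(k)}=(\mat{B}^{(k)})^*\mat{B}^{(k)}$ is a $\gamma$-approximate RPCholesky run on $\mat{A}$.

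One subtlety needs care. The proof of \cref{lem:rpchol-det-moment} only uses the per-path bound $\Prob(\mathcal{I})\le\gamma^{-k}\prod_\ell \mat{A}^{(\ell-1)}_{i_\ell i_\ell}/\tr\mat{A}^{(\ell-1)}$. That bound holds even if $q^{(k)}$ is allowed to depend on $\mat{B}^{(k)}$ rather than on $\mat{A}^{(k)}$ alone. I would note this explicitly, or alternatively rerun that short path-sum argument directly with $\norm{\mat{B}^{(\ell)}}_F^2$ in place of the traces. Either way, the conclusion is
\[
\E\norm{\mat{B}^{(k)}}_F^2=\E\tr\mat{A}^{(k)}\le\gamma^{-1}\bigl((k+1)!\,e_{k+1}(\lambda(\mat{A}))\bigr)^{1/(k+1)}.
\]

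Next, translate the hypotheses. Since $\lambda_j(\mat{A})=\sigma_j(\mat{B})^2$ (padding with zeros if $n>\min\{m,n\}$, which does not change $e_k$), algebraic decay $\sigma_j\le Cj^{-\alpha}$ gives $\lambda_j\le C^2 j^{-2\alpha}$. The condition $\alpha>1/2$ is exactly the condition $2\alpha>1$ needed to apply \cref{cor:rpchol-rates} with constants $(C^2,2\alpha)$. Substituting yields the constant $\bigl(\pi\e/(2\alpha\sin(\pi/(2\alpha)))\bigr)^{2\alpha}$ and the rate $(k+1)^{1-2\alpha}$. Similarly, geometric decay $\sigma_j\le C\rho^{j-1}$ gives $\lambda_j\le C^2(\rho^2)^{j-1}$. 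Applying the geometric case with $(C^2,\rho^2)$ gives $\gamma^{-1}C^2(k+1)\rho^{k}/(1-\rho^2)$, as claimed.

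The only nontrivial step is confirming that the $\gamma$-approximate condition transfers through the Gram correspondence. This reduces to observing that the exact probabilities agree pathwise. Everything else is parameter substitution.
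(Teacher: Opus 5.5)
Your proposal is correct and matches the paper's proof, which applies \cref{cor:rpchol-rates} to $\mat{B}^{*}\mat{B}$ via \cref{rem:gram-correspondence}, using $\lambda_j(\mat{B}^{*}\mat{B})=\sigma_j(\mat{B})^2$ with parameters $(C^2,2\alpha)$ and $(C^2,\rho^2)$. Your check that the $\gamma$-approximate condition transfers pathwise is a step the paper leaves implicit; it holds because the proof of \cref{lem:rpchol-det-moment} uses only the per-path probability bound.
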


\begin{proof}
Apply \cref{cor:rpchol-rates} to $\mat{B}^{*}\mat{B}$ and use
\cref{rem:gram-correspondence}, noting that
$\lambda_j(\mat{B}^{*}\mat{B})=\sigma_j(\mat{B})^2$.
\end{proof}

\section{Convergence rates for Randomly pivoted LU}
\label{sec:lu}

We now turn to randomly pivoted LU, defined by the iteration
in~\cref{eq:rplu-iteration}. The argument used for RPCholesky in the
previous section extends to RPLU with a few changes. The main difference is
that, for a general matrix, the residual norm need not decrease. The
pivot-product argument therefore controls the smallest residual among the
first $k$ iterates, rather than the final residual.

We begin with the analogue of \cref{lem:rpchol-det-moment}. For
$\mat{A}\in\C^{m\times n}$, we use the shorthand
\[
  e_k(\sigma(\mat{A})^2)
  :=
  e_k\bigl(
    \sigma_1(\mat{A})^2,\ldots,
    \sigma_{\min\{m,n\}}(\mat{A})^2
  \bigr).
\]

\begin{theorem}[Approximate RPLU expected residual norm]
\label{lem:rplu-det-moment}
Let $\gamma$-approximate RPLU start from
$\mat{A}=\mat{A}^{(0)}$. Then, for every integer $k\geq1$,
\[
  \E\left[
    \min_{0\leq\ell<k}\norm{\mat{A}^{(\ell)}}_F^2
  \right]
  \leq
  \gamma^{-1}(k!)^{2/k}e_k(\sigma(\mat{A})^2)^{1/k}.
\]
\end{theorem}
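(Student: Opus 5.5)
We follow the proof of \cref{lem:rpchol-det-moment}, with ordered pivot paths now being pairs of tuples. Let $\mathcal{L}_k$ be the set of ordered paths $(\mathcal{I},\mathcal{J})$ with $\mathcal{I}=(i_1,\ldots,i_k)$, $\mathcal{J}=(j_1,\ldots,j_k)$ and $\mat{A}^{(\ell-1)}_{i_\ell,j_\ell}\neq0$ for every $\ell$. The rows in $\mathcal{I}$ are distinct, and so are the columns in $\mathcal{J}$, because a selected row or column of the residual is zero at all later iterations.

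\emph{Step 1: bound the product of residual norms.} For a fixed path, \cref{eq:rplu-distribution,eq:approximate-sampling} give
\[
  \Prob(\mathcal{I},\mathcal{J})\leq\gamma^{-k}\prod_{\ell=1}^k\frac{\abs{\mat{A}^{(\ell-1)}_{i_\ell,j_\ell}}^2}{\norm{\mat{A}^{(\ell-1)}}_F^2}.
\]
Multiplying by $\prod_{\ell=0}^{k-1}\norm{\mat{A}^{(\ell)}}_F^2$ cancels the denominators. By \cref{lem:pivot-product}, the numerators then combine into $\abs{\det\mat{A}_{\mathcal{I},\mathcal{J}}}^2$. Hence
\[
  \E\Bigl[\prod_{\ell=0}^{k-1}\norm{\mat{A}^{(\ell)}}_F^2\Bigr]\leq\gamma^{-k}\sum_{(\mathcal{I},\mathcal{J})\in\mathcal{L}_k}\abs{\det\mat{A}_{\mathcal{I},\mathcal{J}}}^2.
\]
If the residual vanishes before step $k$, the product is zero and the bound is trivial. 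Reordering the rows and columns of a submatrix changes its determinant only by a sign. Each pair of unordered sets $(I,J)$ with $|I|=|J|=k$ therefore corresponds to at most $(k!)^2$ ordered paths: $k!$ orderings of the rows times $k!$ orderings of the columns. This gives
\[
  \E\Bigl[\prod_{\ell=0}^{k-1}\norm{\mat{A}^{(\ell)}}_F^2\Bigr]\leq\gamma^{-k}(k!)^2\sum_{|I|=|J|=k}\abs{\det\mat{A}_{I,J}}^2.
\]

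\emph{Step 2: identify the minor sum.} We need $\sum_{|I|=|J|=k}\abs{\det\mat{A}_{I,J}}^2=e_k(\sigma(\mat{A})^2)$. The Cauchy--Binet formula gives, for each fixed $J$,
\[
  \det\bigl((\mat{A}^*\mat{A})_{J,J}\bigr)=\det\bigl(\mat{A}_{\col,J}^*\mat{A}_{\col,J}\bigr)=\sum_{|I|=k}\abs{\det\mat{A}_{I,J}}^2.
\]
Summing over $J$ and applying \cref{lem:principal-minor-identity} to $\mat{H}=\mat{A}^*\mat{A}$ yields $e_k(\lambda(\mat{A}^*\mat{A}))$. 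The nonzero eigenvalues of $\mat{A}^*\mat{A}$ are the $\sigma_j(\mat{A})^2$, and extra zeros do not change $e_k$, so this equals $e_k(\sigma(\mat{A})^2)$. This is the one new ingredient relative to the Cholesky case. The only care needed is the bookkeeping: Cauchy--Binet must be applied for each fixed $J$, and when $k>\min\{m,n\}$ both sides are zero.

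\emph{Step 3: conclude.} The residual norms are not monotone, so we replace \cref{eq:rpchol-geometric-mean} with
\[
  \Bigl(\min_{0\leq\ell<k}\norm{\mat{A}^{(\ell)}}_F^2\Bigr)^k\leq\prod_{\ell=0}^{k-1}\norm{\mat{A}^{(\ell)}}_F^2.
\]
Applying Jensen's inequality to $x\mapsto x^k$ gives
\[
  \Bigl(\E\bigl[\min_{0\leq\ell<k}\norm{\mat{A}^{(\ell)}}_F^2\bigr]\Bigr)^k\leq\gamma^{-k}(k!)^2e_k(\sigma(\mat{A})^2).
\]
Taking the $k$th root gives $\gamma^{-1}(k!)^{2/k}e_k(\sigma(\mat{A})^2)^{1/k}$, which is the claim.
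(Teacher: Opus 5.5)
Your proposal is correct and follows the paper's proof step for step. The steps are the same path-probability bound cancelled against $\prod_{\ell=0}^{k-1}\norm{\mat{A}^{(\ell)}}_F^2$, the pivot-product identity with the $(k!)^2$ ordering count, Cauchy--Binet for each fixed $J$ followed by the principal-minor identity for $\mat{A}^{*}\mat{A}$, and finally $\bigl(\min_{0\leq\ell<k}\norm{\mat{A}^{(\ell)}}_F^2\bigr)^k\leq\prod_{\ell=0}^{k-1}\norm{\mat{A}^{(\ell)}}_F^2$ with Jensen. Your remarks on early termination and on $k>\min\{m,n\}$ are correct details the paper leaves implicit.
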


\begin{proof}
Let $\mathcal{L}_k$ denote the set of ordered pivot paths
\[
  \mathcal{I}=(i_1,\ldots,i_k),
  \qquad
  \mathcal{J}=(j_1,\ldots,j_k),
\]
where $i_\ell\in[m]$, $j_\ell\in[n]$, and
$\mat{A}^{(\ell-1)}_{i_\ell,j_\ell}\neq0$ for every $\ell=1,\ldots,k$.
Each path in $\mathcal{L}_k$ has no repeated row or column indices, since a selected row and column remain zero at every subsequent iteration.

Fix $(\mathcal{I},\mathcal{J})\in\mathcal{L}_k$. Then the RPLU rule
in~\cref{eq:rplu-distribution} and the approximate sampling condition
in~\cref{eq:approximate-sampling} give
\[
  \Prob(\mathcal{I},\mathcal{J})
  \leq
  \gamma^{-k}
  \prod_{\ell=1}^k
  \frac{\abs{\mat{A}^{(\ell-1)}_{i_\ell,j_\ell}}^2}
       {\norm{\mat{A}^{(\ell-1)}}_F^2}.
\]
Therefore
\[
\begin{aligned}
  \E\left[
    \prod_{\ell=0}^{k-1}\norm{\mat{A}^{(\ell)}}_F^2
  \right]
  &=
  \sum_{(\mathcal{I},\mathcal{J})\in\mathcal{L}_k}
  \Prob(\mathcal{I},\mathcal{J})
  \prod_{\ell=0}^{k-1}\norm{\mat{A}^{(\ell)}}_F^2\\
  &\leq
  \gamma^{-k}
  \sum_{(\mathcal{I},\mathcal{J})\in\mathcal{L}_k}
  \left(
    \prod_{\ell=1}^k
    \frac{\abs{\mat{A}^{(\ell-1)}_{i_\ell,j_\ell}}^2}
         {\norm{\mat{A}^{(\ell-1)}}_F^2}
  \right)
  \left(
    \prod_{\ell=0}^{k-1}\norm{\mat{A}^{(\ell)}}_F^2
  \right)\\
  &=
  \gamma^{-k}
  \sum_{(\mathcal{I},\mathcal{J})\in\mathcal{L}_k}
  \prod_{\ell=1}^k
  \abs{\mat{A}^{(\ell-1)}_{i_\ell,j_\ell}}^2\\
  &=
  \gamma^{-k}
  \sum_{(\mathcal{I},\mathcal{J})\in\mathcal{L}_k}
  \abs{\det\mat{A}_{\mathcal{I},\mathcal{J}}}^2,
\end{aligned}
\]
where the last line follows from \cref{lem:pivot-product}. For each pair of
unordered sets $I\subseteq[m]$ and $J\subseteq[n]$ with $\abs{I}=\abs{J}=k$,
there are at most $(k!)^2$ corresponding ordered paths
$(\mathcal{I},\mathcal{J})\in\mathcal{L}_k$, and each contributes
$\abs{\det\mat{A}_{I,J}}^2$. Thus:
\[
\begin{aligned}
  \E\left[
    \prod_{\ell=0}^{k-1}\norm{\mat{A}^{(\ell)}}_F^2
  \right]
  &\leq
  \gamma^{-k}(k!)^2
  \sum_{\substack{I\subseteq[m]\\\abs{I}=k}}
  \sum_{\substack{J\subseteq[n]\\\abs{J}=k}}
  \abs{\det\mat{A}_{I,J}}^2\\
  &=
  \gamma^{-k}(k!)^2
  \sum_{\substack{J\subseteq[n]\\\abs{J}=k}}
  \det\bigl((\mat{A}^{*}\mat{A})_{J,J}\bigr)\\
  &=
  \gamma^{-k}(k!)^2e_k(\sigma(\mat{A})^2).
\end{aligned}
\]
where the second line follows from Cauchy--Binet and the third from
\cref{lem:principal-minor-identity}.

The LU residual norm need not decrease at every iteration, so we set
$Z_k:=\min_{0\leq\ell<k}\norm{\mat{A}^{(\ell)}}_F^2$. Then
\[
  Z_k^k
  \leq
  \prod_{\ell=0}^{k-1}\norm{\mat{A}^{(\ell)}}_F^2.
\]
Convexity of $x\mapsto x^k$ then gives, by Jensen's inequality,
\[
  \bigl(\E[Z_k]\bigr)^k
  \leq
  \E[Z_k^k]
  \leq
  \gamma^{-k}(k!)^2e_k(\sigma(\mat{A})^2).
\]
Taking the $k$th root proves the result.
\end{proof}

We now combine this estimate with \cref{lem:esp-decay} to get convergence rates, exactly as for
RPCholesky.

\begin{corollary}[Approximate RPLU rates]
\label{cor:rplu-rates}
Let $\gamma$-approximate RPLU start from
$\mat{A}=\mat{A}^{(0)}$. If $
  \sigma_j(\mat{A})\leq Cj^{-\alpha}$ for $\alpha>1$, then
\[
  \E\min_{0\leq\ell\leq k}\norm{\mat{A}^{(\ell)}}_F^2
  \leq
  \gamma^{-1}C^2
  \left(\frac{\pi\e}{2\alpha\sin(\pi/(2\alpha))}\right)^{2\alpha}
  (k+1)^{2(1-\alpha)}.
\]
If instead $\sigma_j(\mat{A})\leq C\rho^{j-1}$ for $0 < \rho < 1$, then
\[
  \E\min_{0\leq\ell\leq k}\norm{\mat{A}^{(\ell)}}_F^2
  \leq
  \gamma^{-1}\frac{C^2(k+1)^2}{1-\rho^2}\rho^k.
\]
\end{corollary}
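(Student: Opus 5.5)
The plan is to apply \cref{lem:rplu-det-moment} with $k+1$ in place of $k$. This controls $\E\min_{0\leq\ell<k+1}\norm{\mat{A}^{(\ell)}}_F^2=\E\min_{0\leq\ell\leq k}\norm{\mat{A}^{(\ell)}}_F^2$ by
\[
  \gamma^{-1}\bigl((k+1)!\bigr)^{2/(k+1)}\,e_{k+1}(\sigma(\mat{A})^2)^{1/(k+1)}.
\]
We then bound each factor separately, exactly as in the proof of \cref{cor:rpchol-rates}.

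For the combinatorial factor, use $((k+1)!)^{1/(k+1)}\leq k+1$, so that $((k+1)!)^{2/(k+1)}\leq(k+1)^2$.

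For the elementary symmetric polynomial, apply \cref{lem:esp-decay} to the sequence $\lambda_j:=\sigma_j(\mat{A})^2$ with index $k+1$. Two cases arise.
\begin{itemize}
\item \emph{Algebraic decay.} If $\sigma_j\leq Cj^{-\alpha}$, then $\lambda_j\leq C^2j^{-2\alpha}$ with $2\alpha>1$ (indeed $\alpha>1$ suffices, and is more than enough). The lemma gives
\[
  e_{k+1}^{1/(k+1)}\leq C^2\left(\frac{\pi\e}{2\alpha (k+1)\sin(\pi/(2\alpha))}\right)^{2\alpha}.
\]
Multiplying by $(k+1)^2$ yields the stated constant times $(k+1)^{2-2\alpha}=(k+1)^{2(1-\alpha)}$.
\item \emph{Geometric decay.} If $\sigma_j\leq C\rho^{j-1}$, then $\lambda_j\leq C^2(\rho^2)^{j-1}$. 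The lemma with ratio $\rho^2$ gives
\[
  e_{k+1}^{1/(k+1)}\leq \frac{C^2}{1-\rho^2}(\rho^2)^{k/2}=\frac{C^2}{1-\rho^2}\rho^k.
\]
Multiplying by $(k+1)^2$ gives the second bound.
\end{itemize}

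A small bookkeeping issue is that \cref{lem:esp-decay} requires $k+1\leq n$, where $n=\min\{m,n\}$ is the number of singular values. If $k+1$ exceeds this, then $e_{k+1}=0$ (equivalently, the residual reaches zero by rank considerations), so the bound holds trivially.

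No step is a real obstacle: the argument is pure substitution. The only care needed is the index shift from $k$ to $k+1$, so that the minimum ranges over $0\leq\ell\leq k$, and the squaring of the decay hypotheses.
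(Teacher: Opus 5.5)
Your proposal is correct and matches the paper's proof: apply \cref{lem:rplu-det-moment} with $k+1$ in place of $k$, bound $((k+1)!)^{2/(k+1)}\leq(k+1)^2$, and apply \cref{lem:esp-decay} to the squared singular values, with exponent $2\alpha$ in the algebraic case and ratio $\rho^2$ in the geometric case. Your remark on the case $k+1>\min\{m,n\}$ is harmless extra bookkeeping that the paper leaves implicit: there $e_{k+1}=0$, so the bound holds trivially.
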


\begin{proof}
Apply \cref{lem:esp-decay} to
$\sigma_j(\mat{A})^2\leq C^2j^{-2\alpha}$ in the algebraic case and to
$\sigma_j(\mat{A})^2\leq C^2(\rho^2)^{j-1}$ in the geometric case.
Substitution into \cref{lem:rplu-det-moment}, together with
$((k+1)!)^{2/(k+1)}\leq(k+1)^2$, gives both results.
\end{proof}

Compared with the corresponding rates for RPQR in~\cref{cor:rpqr-rates}, the RPLU bounds incur an additional factor of
$k+1$. This comes from the $(k!)^2$ possible orderings of the row and
column pivots, in place of the single $k!$ factor for RPCholesky.

In practice, an important distinction between RPLU and RPCholesky is the cost of sampling a pivot.
For RPCholesky, forming the exact distribution requires only the $n$
diagonal entries of the residual, whereas exact RPLU sampling requires all
$mn$ entries and may therefore cost $\mathcal{O}(mn)$ per iteration.
Approximate pivoting allows us to replace this distribution by a cheaper
surrogate, while changing the bounds only by the factor $\gamma^{-1}$. In the next section, we use this flexibility to develop two efficient RPLU schemes: one tailored to Cauchy-like matrices and the other based on random sketching.

\section{Efficient approximate RPLU schemes}
\label{sec:efficient-rplu}

If the residual row norms are available, the RPLU distribution can be
sampled in two stages: first sample a row proportionally to its squared norm,
then form the selected row and sample a column proportionally to the squared
magnitudes of its entries~\cite{GillesWilber2026}:
\[
  \Prob(i_{k+1}=i\mid\mat{A}^{(k)})
  \Prob(j_{k+1}=j\mid i_{k+1}=i,\mat{A}^{(k)})
  =
  \frac{\norm{\mat{A}^{(k)}_{i,\col}}_2^2}
       {\norm{\mat{A}^{(k)}}_F^2}
  \frac{\abs{\mat{A}^{(k)}_{ij}}^2}
       {\norm{\mat{A}^{(k)}_{i,\col}}_2^2}
  =
  \frac{\abs{\mat{A}^{(k)}_{ij}}^2}
       {\norm{\mat{A}^{(k)}}_F^2}.
\]
Thus, once the residual row norms are known, exact RPLU pivots can be sampled
without forming the full distribution.
This factorization suggests a simple
design principle: approximate the distribution over rows, but retain
exact sampling within the selected row. We develop two instances of this
idea.
The
first applies to a very specialized class of matrices, while the second
applies to general matrices using sketching.

\subsection{Random Generator Pivoting}
\label{sec:rgp}
We first consider a pivoting distribution tailored to Cauchy-like matrices, a structured class for
which LU-based low-rank approximation can be implemented very
efficiently~\cite{GillesWilber2026}. A matrix
$\mat{A}\in\mathbb C^{m\times n}$ is called Cauchy-like if, for some $p$,
its entries can be written as
\begin{equation}
\label{eq:cauchy-like}
    \mat{A}_{ij}
    =
    \sum_{r=1}^p \frac{\mat{G}_{ir}\mat{B}_{rj}}{x_i-y_j}.
\end{equation}
Here, $x_1,\ldots,x_m\in\mathbb C$ and
$y_1,\ldots,y_n\in\mathbb C$ are the nodes and satisfy $x_i\neq y_j$ for
every $i,j$, while $\mat{G}\in\mathbb C^{m\times p}$ and
$\mat{B}\in\mathbb C^{p\times n}$ are the generators. The smallest possible
$p$ is called the displacement rank of $\mat{A}$.

Low-rank approximation of such matrices is an important computational step
in several applications, including matrix
equations~\cite{druskin2011analysis}, PDE
solvers~\cite{TownsendWilber2018}, and rational
approximation~\cite{antoulas2010interpolatory,aaa}.

Writing
\[
    \mat{X}:=\diag(x_1,\ldots,x_m),
    \qquad
    \mat{Y}:=\diag(y_1,\ldots,y_n),
\]
the entrywise representation in~\cref{eq:cauchy-like} is equivalent to the
Sylvester displacement equation
\[
    \mat{X}\mat{A}-\mat{A}\mat{Y}
    =
    \mat{G}\mat{B}.
\]
The key property is that the Cauchy-like structure is preserved by
Gaussian elimination. Suppose that
\[
    \mat{X}\mat{A}^{(k)}-\mat{A}^{(k)}\mat{Y}
    =
    \mat{G}^{(k)}\mat{B}^{(k)}.
\]
The Schur-complement update and the corresponding generator updates are
\begin{equation}
\begin{aligned}
    \mat{A}^{(k+1)}
    &:=
    \mat{A}^{(k)}-
    \frac{\mat{A}_{\col,j}^{(k)}\mat{A}_{i,\col}^{(k)}}
         {\mat{A}_{ij}^{(k)}},\\
    \mat{G}^{(k+1)}
    &:=
    \mat{G}^{(k)}-
    \frac{\mat{A}_{\col,j}^{(k)}\mat{G}_{i,\col}^{(k)}}
         {\mat{A}_{ij}^{(k)}},\\
    \mat{B}^{(k+1)}
    &:=
    \mat{B}^{(k)}-
    \frac{\mat{B}_{\col,j}^{(k)}\mat{A}_{i,\col}^{(k)}}
         {\mat{A}_{ij}^{(k)}}.
\end{aligned}
\label{eq:cauchy-generator-update}
\end{equation}
Thus, the displacement rank does not increase, and the residual can be
represented and updated from its generators in $\mathcal O((m+n)p)$
operations per iteration~\cite{gohberg1995fast,pan2000superfast,boros2002pivoting}.

Building on this property, \cite[Section~5.1]{GillesWilber2026} gives a fast
implementation of RPLU for Cauchy-like matrices. Exact RPLU pivots are
sampled using row-norm bounds from a Barnes--Hut-like algorithm,
followed by rejection sampling. The resulting method costs
$\mathcal O(kp^2(m+n)\log(m+n))$ operations for $k$ iterations, but requires
constructing and traversing spatial trees and interaction lists.

Approximate pivoting allows us to replace this procedure with a much cheaper
sampling distribution. At iteration $\ell$, define the generator row weights
\[
    w_i
    :=
    \norm{\mat{G}_{i,\col}^{(\ell)}\mat{B}^{(\ell)}}_2^2
    =
    \mat{G}_{i,\col}^{(\ell)}
    \bigl(\mat{B}^{(\ell)}(\mat{B}^{(\ell)})^{*}\bigr)
    (\mat{G}_{i,\col}^{(\ell)})^{*},
    \qquad i=1,\ldots,m.
\]
We define Random Generator pivoting (RGP) by sampling each pivot from the
following distribution:
\begin{equation}
    q_{ij}^{(\ell)}
    :=
    \Prob((i_{\ell+1},j_{\ell+1})=(i,j)\mid\mat{A}^{(\ell)})
    =
    \frac{w_i}{\norm{w}_1}
    \frac{\abs{\mat{A}_{ij}^{(\ell)}}^2}
         {\norm{\mat{A}_{i,\col}^{(\ell)}}_2^2}.
    \label{eq:generator-pivot-probability}
\end{equation}
Thus, RGP replaces the residual row norms with the generator row weights,
which require only a few vector operations and no spatial trees or geometric
precomputation.
After sampling a pivot according to~\eqref{eq:generator-pivot-probability},
the generators are updated using~\eqref{eq:cauchy-generator-update}. Each
iteration costs $\mathcal O((m+n)p^2)$ operations. The algorithm is
summarized in~\cref{alg:random-generator-pivoting}.

\begin{algorithm}[H]
\caption{LU with Random Generator Pivoting}
\label{alg:random-generator-pivoting}
\begin{algorithmic}[1]
\Require Generators $\mat{G}^{(0)}\in\mathbb C^{m\times p}$ and
$\mat{B}^{(0)}\in\mathbb C^{p\times n}$, nodes
$\mat{x}\in\mathbb C^m$ and $\mat{y}\in\mathbb C^n$ defining
$\mat{A}_{ij}=(\mat{G}^{(0)}\mat{B}^{(0)})_{ij}/(x_i-y_j)$, and target
rank $k$
\Statex \textbf{Output:} Index sets $I_k,J_k$ such that
$\mat{A}\approx
\mat{A}_{\col,J_k}\mat{A}_{I_k,J_k}^{-1}\mat{A}_{I_k,\col}$
\State $I_0\gets\varnothing$, $J_0\gets\varnothing$
\For{$\ell=0,\ldots,k-1$}
    \State $\mat{H}\gets
    \mat{B}^{(\ell)}(\mat{B}^{(\ell)})^{*}$
    \State $w\gets\diag\!\left(
    \mat{G}^{(\ell)}\mat{H}(\mat{G}^{(\ell)})^{*}\right)$
    \Comment{Compute row weights}
    \State Sample $i_{\ell+1}$ with probability
    $w_i/\norm{w}_1$
    \Comment{Sample row index}
    \State $\mat{r}_{\ell+1}\gets
    (\mat{G}_{i_{\ell+1},\col}^{(\ell)}\mat{B}^{(\ell)})
    \oslash(x_{i_{\ell+1}}-\mat{y})$
    \Comment{Form row $i_{\ell+1}$ of $\mat{A}^{(\ell)}$}
    \State Sample $j_{\ell+1}$ with probability
    $\abs{(\mat{r}_{\ell+1})_j}^2/\norm{\mat{r}_{\ell+1}}_2^2$
    \Comment{Sample column index}
    \State $\mat{c}_{\ell+1}\gets
    (\mat{G}^{(\ell)}\mat{B}_{\col,j_{\ell+1}}^{(\ell)})
    \oslash(\mat{x}-y_{j_{\ell+1}})$
    \Comment{Form column $j_{\ell+1}$ of $\mat{A}^{(\ell)}$}
    \State $p_{\ell+1}\gets(\mat{r}_{\ell+1})_{j_{\ell+1}}$
    \State $\mat{G}^{(\ell+1)}\gets
    \mat{G}^{(\ell)}-p_{\ell+1}^{-1}\mat{c}_{\ell+1}
    \mat{G}_{i_{\ell+1},\col}^{(\ell)}$
    \State $\mat{B}^{(\ell+1)}\gets
    \mat{B}^{(\ell)}-p_{\ell+1}^{-1}
    \mat{B}_{\col,j_{\ell+1}}^{(\ell)}\mat{r}_{\ell+1}$
    \Comment{Update generators}
    \State $I_{\ell+1}\gets I_\ell\cup\{i_{\ell+1}\}$,
    $J_{\ell+1}\gets J_\ell\cup\{j_{\ell+1}\}$
\EndFor
\State \Return $I_k,J_k$
\end{algorithmic}
\end{algorithm}

To compare RGP with exact RPLU, suppose that the Cauchy denominators satisfy
\[
  0<d_{\min}
  \leq\abs{x_i-y_j}
  \leq d_{\max}<\infty
\]
for every $i,j$. By~\eqref{eq:cauchy-like},
\[
  w_i\leq d_{\max}^2\norm{\mat{A}_{i,\col}^{(\ell)}}_2^2,
  \qquad
  \norm{w}_1\geq d_{\min}^2\norm{\mat{A}^{(\ell)}}_F^2.
\]
Substitution into~\eqref{eq:generator-pivot-probability} gives
\[
  q_{ij}^{(\ell)}
  \leq
  \left(\frac{d_{\max}}{d_{\min}}\right)^2
  \frac{\abs{\mat{A}_{ij}^{(\ell)}}^2}
       {\norm{\mat{A}^{(\ell)}}_F^2}.
\]
Consequently, RGP is $\gamma$-approximate RPLU, with
$\gamma=(d_{\min}/d_{\max})^2$, so RGP has the same convergence rates as RPLU, albeit with a potentially large prefactor.

In
\cref{fig:rgpruntime}, we compare RGP with the RPLU implementation
of~\cite{GillesWilber2026} and an RSVD accelerated by
FMM2D~\cite{AskhamEtAl2021}\footnote{The experiments were run on a 16-core
Apple M3 Max with 64~GB of memory using 12 threads.} on Loewner matrices, which are Cauchy-like with
displacement rank $p=2$. We observe that RGP and RPLU have nearly identical
approximation errors, both within a modest constant factor of optimal, while
RSVD is nearly optimal. RGP is nevertheless $16\times$ faster
than RPLU and $650\times$ faster than RSVD.

\begin{figure}
    \centering
    \includegraphics[
        width=\linewidth,
        trim=0 60 0 170,
        clip
    ]{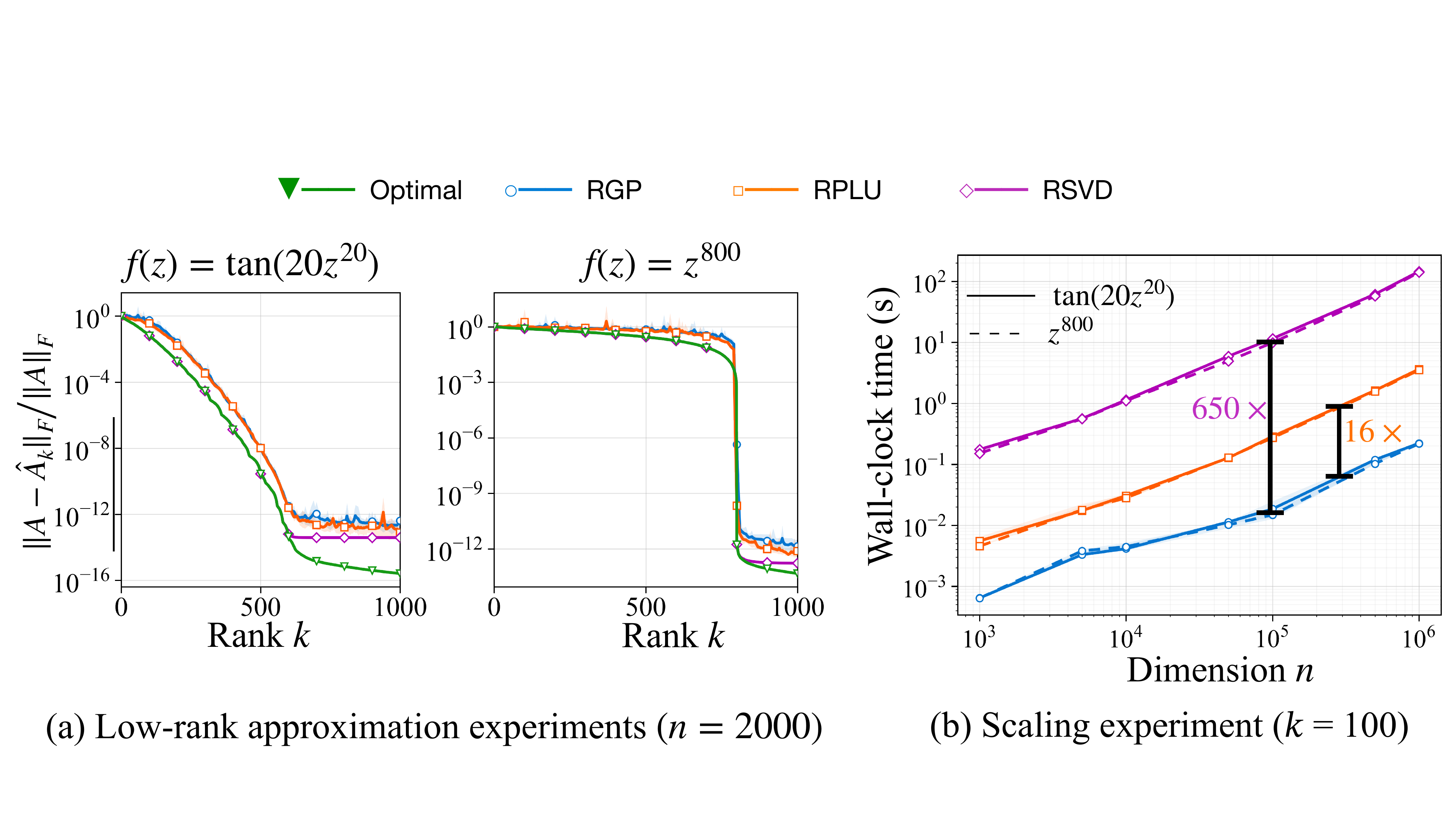}
    \caption{Approximation quality and timing of RGP, RPLU, and RSVD on two
    classes of Loewner matrices
    $\mat{A}_{ij}=(f(x_i)-f(y_j))/(x_i-y_j)$, where $x_i$ and $y_j$ are
    sampled independently and uniformly from the unit circle.
    Left: relative Frobenius error for $n=m=2000$, with
    $f(z)=\tan(20z^{20})$ and $f(z)=z^{800}$. RGP and RPLU are repeated ten
    times; we report the mean, with a shaded band showing the minimum and
    maximum over the ten runs. RSVD and the optimal approximation are
    computed once.
    Right: computation time as a function of $n=m$ for a rank-$100$
    approximation. }
    \label{fig:rgpruntime}
\end{figure}

\subsection{Sketchy pivoting}
\label{sec:sketchy-rplu}

Next, consider a sketch-based implementation of approximate RPLU,
motivated by recent work on randomized pivoting and CUR approximations
\cite{DongMartinsson2023,PearceEtAl2025,PritchardEtAl2025,ChenEtAl2020}.
The goal is to replace the expensive search over the full residual with a
search over a much smaller sketch.

To this end, fix a target rank $k$ and a block size $b\geq1$, and draw a random sketch
matrix partitioned into $k$ blocks,
\[
    \mat{\Omega}
    =
    [\mat{\Omega}_1\ \cdots\ \mat{\Omega}_k]
    \in\R^{n\times kb},
\]
where $\mat{\Omega}_1,\ldots,\mat{\Omega}_k\in\R^{n\times b}$. Then, compute
\[
    \mat{Y}^{(0)}
    =
    [\mat{Y}_1^{(0)}\ \cdots\ \mat{Y}_k^{(0)}]
    =
    \mat{A}\mat{\Omega},
\]
as a single matrix-matrix product.
The rest of the algorithm will rely on this single sketch to sample pivots.

At the first iteration, the first block of columns $\mat{Y}_1^{(0)}$ is used to estimate the row norms
and sample one pivot according to
\[
  \Prob((i_1,j_1)=(i,j)\mid\mat{\Omega}_1)
  =
  \frac{\norm{(\mat{Y}_1^{(0)})_{i,\col}}_2^2}
       {\norm{\mat{Y}_1^{(0)}}_F^2}
  \frac{\abs{\mat{A}_{ij}}^2}
       {\norm{\mat{A}_{i,\col}}_2^2},
\]
using the two stage sampling procedure described above. After the pivot is selected,
the sketch is downdated as
in~\cite{ChenEtAl2020,PritchardEtAl2025}
\[
\begin{aligned}
  \mat{Y}^{(1)}
  &=
  \mat{Y}^{(0)}
  -\frac{\mat{A}_{\col,j_1}}{\mat{A}_{i_1,j_1}}
   \mat{Y}^{(0)}_{i_1,\col}\\
  &=
  \left(
    \mat{A}
    -\frac{\mat{A}_{\col,j_1}\mat{A}_{i_1,\col}}
          {\mat{A}_{i_1,j_1}}
  \right)\mat{\Omega}
  =\mat{A}^{(1)}\mat{\Omega},
\end{aligned}
\]
so that it is a sketch of the
residual rather than the original matrix.

The remaining iterations proceed in the same way. At iteration $\ell$, the
active block satisfies
\[
    \mat{Y}_{\ell+1}^{(\ell)}
    =
    \mat{A}^{(\ell)}\mat{\Omega}_{\ell+1}.
\]
This block is used to estimate the row norms and sample a pivot, after which
the sketch is downdated for the next iteration.
The sequential use of the blocks is crucial for the analysis: since
$\mat{\Omega}_{\ell+1}$ is not used to sample pivots before iteration $\ell+1$, it remains
independent of $\mat{A}^{(\ell)}$ whenever the sketch blocks are independent.
For the computation, however, all sketching can be done up front in the
single block product $\mat{A}\mat{\Omega}$, and the rest of the algorithms accesses $\mat{A}$ only through the selected $k$ rows and $k$ columns of $\mat{A}$.

The algorithm is summarized in~\cref{alg:sketchy-rplu}, with the
approximant $\mat{\widehat A}^{(\ell)}$ from~\cref{eq:rplu-iteration}
stored in factored form as
$\mat{\widehat A}^{(\ell)}
=\mat{L}_{\col,1:\ell}\mat{U}_{1:\ell,\col}$.

\begin{algorithm}[H]
\caption{LU with sketchy pivoting}
\label{alg:sketchy-rplu}
\begin{algorithmic}[1]
\Require Matrix $\mat{A}\in\R^{m\times n}$, target rank $k$, block size
$b$, and sketch $\mat{\Omega}=[\mat{\Omega}_1\ \cdots\ \mat{\Omega}_k]
\in\R^{n\times kb}$
\Statex \textbf{Output:} Factors $\mat{L}\in\R^{m\times k}$ and
$\mat{U}\in\R^{k\times n}$ such that $\mat{A}\approx\mat{L}\mat{U}$
\State $\mat{Y}^{(0)}=[\mat{Y}_1^{(0)}\ \cdots\ \mat{Y}_k^{(0)}]
\gets\mat{A}\mat{\Omega}$
\State $\mat{L}\gets\mat{0}_{m\times k}$,
$\mat{U}\gets\mat{0}_{k\times n}$
\For{$\ell=0,\ldots,k-1$}
    \State Sample $i_{\ell+1}$ with probability
    $\norm{(\mat{Y}_{\ell+1}^{(\ell)})_{i,\col}}_2^2/
    \norm{\mat{Y}_{\ell+1}^{(\ell)}}_F^2$
    \Comment{Sample row index}
    \State $r_{\ell+1}\gets
    \mat{A}_{i_{\ell+1},\col}
    -\mat{L}_{i_{\ell+1},1:\ell}\mat{U}_{1:\ell,\col}$
    \Comment{Form row $i_{\ell+1}$ of $\mat{A}^{(\ell)}$}
    \State Sample $j_{\ell+1}$ with probability
    $\abs{(r_{\ell+1})_j}^2/\norm{r_{\ell+1}}_2^2$
    \Comment{Sample column index}
    \State $c_{\ell+1}\gets
    \mat{A}_{\col,j_{\ell+1}}
    -\mat{L}_{\col,1:\ell}\mat{U}_{1:\ell,j_{\ell+1}}$
    \Comment{Form column $j_{\ell+1}$ of $\mat{A}^{(\ell)}$}
    \State $p_{\ell+1}\gets(r_{\ell+1})_{j_{\ell+1}}$
    \State $\mat{L}_{\col,\ell+1}\gets c_{\ell+1}/p_{\ell+1}$,
    $\mat{U}_{\ell+1,\col}\gets r_{\ell+1}$
    \Comment{Update $\mat{L}$ and $\mat{U}$}
    \State $\mat{Y}^{(\ell+1)}\gets
    \mat{Y}^{(\ell)}
    -\mat{L}_{\col,\ell+1}\mat{Y}^{(\ell)}_{i_{\ell+1},\col}$
    \Comment{Downdate the sketch}\label{line:sketch-downdate}
\EndFor
\State \Return $\mat{L},\mat{U}$
\end{algorithmic}
\end{algorithm}

We now show that \cref{alg:sketchy-rplu} samples from an approximate RPLU
distribution. For the analysis, we take $\mat{\Omega}$ to be a standard
Gaussian matrix, that is, a matrix whose entries are independent standard
normal random variables.

Thus, it only remains to compare the row-sampling distribution induced by the
sketch with that defined by the exact row norms. High-probability estimates
of individual row norms under Gaussian projection is a standard consequence
of the Johnson--Lindenstrauss lemma; see, e.g., \cite{Woodruff2014}.
Here we require a normalized row norm estimate, and to obtain simple conditions, we give a dimension-independent bound on its expectation.

\begin{lemma}[Normalized Gaussian row norm estimates]
\label{lem:gaussian-row-energy-sampling}
Let $\mat{H}\in\R^{m\times n}$ be nonzero, let $b\geq3$, and let
$\mat{\Omega}\in\R^{n\times b}$ be a standard Gaussian matrix.
Then, for every $i$,
\begin{equation}
\label{eq:gaussian-row-energy-sampling}
    \E_{\mat{\Omega}}
    \left[
      \frac{\norm{\mat{H}_{i,\col}\mat{\Omega}}_2^2}
           {\norm{\mat{H}\mat{\Omega}}_F^2}
    \right]
    \leq
    \frac{b}{b-2}
    \frac{\norm{\mat{H}_{i,\col}}_2^2}
         {\norm{\mat{H}}_F^2}.
\end{equation}
\end{lemma}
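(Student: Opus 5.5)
We prove \cref{lem:gaussian-row-energy-sampling} by reducing to a statement about independent Gaussian vectors. We use rotational invariance of $\mat{\Omega}$ to diagonalize $\mat{H}$, separate row $i$ from the rest of the matrix, and control the remaining term with an inverse chi-square moment. The constant $b/(b-2)$ is exactly $\E[b/\chi^2_b]$, which signals that this approach should work.

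First, write the rows of $\mat{H}\mat{\Omega}$ as $\mat{\Omega}^{\top}\mat{H}_{i,\col}^{\top}$. Let $h_i:=\mat{H}_{i,\col}^{\top}$, which we may assume is nonzero since otherwise the claim is trivial. Since the columns of $\mat{\Omega}$ are i.i.d.\ $N(0,\mat{I}_n)$, the vector $\mat{\Omega}^{\top}h_i$ has the form $\norm{h_i}_2\, g$ with $g\sim N(0,\mat{I}_b)$. More precisely, pick an orthogonal $\mat{Q}$ whose first column is $h_i/\norm{h_i}_2$ and replace $\mat{\Omega}$ by $\mat{Q}^{\top}\mat{\Omega}$, which has the same distribution. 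Then $\mat{H}_{i,\col}\mat{\Omega}=\norm{h_i}_2\,\mat{\Omega}_{1,\col}$.

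Second, lower bound the denominator. Let $P$ denote the orthogonal projector onto $\mathrm{span}(h_i)$. Write $\norm{\mat{H}\mat{\Omega}}_F^2=\sum_r \norm{\mat{\Omega}^{\top}h_r}_2^2$. The cleanest route is to apply the same argument in the opposite order: condition on the row space instead of singling out row $i$. Write $\norm{\mat{H}\mat{\Omega}}_F^2=\sum_{s}\sigma_s^2\norm{\mat{\Omega}^{\top}v_s}_2^2=\sum_s\sigma_s^2 X_s$, where the $v_s$ are right singular vectors and the $X_s\sim\chi^2_b$ are independent, by rotational invariance. Likewise
\[
\norm{\mat{H}_{i,\col}\mat{\Omega}}_2^2=\Bigl\|\sum_s \mat{U}_{is}\sigma_s g_s\Bigr\|_2^2,
\]
where $g_s:=\mat{\Omega}^{\top}v_s$ are independent $N(0,\mat{I}_b)$ vectors. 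Expanding the square, the cross terms $\langle g_s,g_t\rangle$ with $s\ne t$ have zero conditional expectation given the norms $\{\norm{g_s}\}$, because each $g_s$ has a uniform direction independent of its norm. Hence
\[
\E\left[\frac{\norm{\mat{H}_{i,\col}\mat{\Omega}}_2^2}{\norm{\mat{H}\mat{\Omega}}_F^2}\right]
=\sum_s \abs{\mat{U}_{is}}^2\,\E\left[\frac{\sigma_s^2X_s}{\sum_t\sigma_t^2X_t}\right].
\]

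The main obstacle is bounding $\E[\sigma_s^2X_s/\sum_t\sigma_t^2X_t]\le \frac{b}{b-2}\,\sigma_s^2/\sum_t\sigma_t^2$ uniformly in the spectrum. To handle it, we use the standard Dirichlet/Gamma trick and bound $\E[X_s/S]$ with $S=\sum_t w_tX_t$ and $w_t=\sigma_t^2/\norm{\mat{H}}_F^2$. Writing $X_s/S$ via $\frac{1}{S}=\int_0^\infty e^{-uS}\,du$ and independence gives
\[
\E[X_s/S]=\int_0^\infty b(1+2uw_s)^{-1}\prod_t(1+2uw_t)^{-b/2}\,du.
\]
Since $w_s\ge0$, this is at most $b\int_0^\infty\prod_t(1+2uw_t)^{-b/2}\,du=b\,\E[1/S]$. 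Jensen then applies. By convexity of $x\mapsto 1/x$ and $\sum_t w_t=1$, we have $1/S\le\sum_t w_t/X_t$. Therefore $\E[1/S]\le\E[1/X_t]=1/(b-2)$, which is finite for $b\ge3$. Thus $\E[w_sX_s/S]\le \frac{b}{b-2}w_s$. Substituting back and using $\sum_s\abs{\mat{U}_{is}}^2\sigma_s^2=\norm{\mat{H}_{i,\col}}_2^2$ gives \eqref{eq:gaussian-row-energy-sampling}. The only delicate point is justifying that the cross terms vanish: conditioning on all the norms $\norm{g_t}$ leaves the directions uniform and independent, so they do.
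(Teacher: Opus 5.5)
Your proof is correct, and it takes a genuinely different route from the paper's. The paper never diagonalizes. It stays with the rows of $\mat{H}$ and uses Cauchy--Schwarz in the form $W^2\le\widehat W\sum_j w_j/X_j$, which is the same inequality as your Jensen step $1/S\le\sum_t w_t/X_t$. This gives $\E[\widehat w_i/\widehat W]\le (w_i/W^2)\sum_j w_j\E[X_i/X_j]$. Here $X_i$ and $X_j$ are \emph{correlated} $\chi^2_b$ variables with correlation $\rho=u_i^\top u_j$, so the paper cites Joarder for the closed form $\E[X_i/X_j]=(b-2\rho^2)/(b-2)$. You instead rotate into the right singular basis, which makes the chi-squares independent. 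You then remove the cross terms by the sign/direction symmetry of the $g_s$; this is legitimate, since each cross term divided by $\sum_t\sigma_t^2X_t$ is bounded. Finally you bound $\E[X_s/S]\le b\,\E[1/S]$ via the Laplace representation and the chi-square MGF. Your route buys a self-contained proof that needs only the $\chi^2_b$ MGF and $\E[1/\chi^2_b]=1/(b-2)$. The paper's route avoids the SVD and the cross-term bookkeeping by outsourcing the correlated ratio moment to a citation. Once you have independence, the Laplace step is in fact unnecessary. Jensen gives $X_s/S\le X_s\sum_t w_t/X_t$, and independence gives $\E[X_s\sum_t w_t/X_t]=w_s+(1-w_s)\frac{b}{b-2}\le\frac{b}{b-2}$. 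This is exactly the paper's computation with the correlations forced into $\{0,1\}$, where Joarder's formula reduces to elementary facts; that is precisely what your diagonalization buys. Finally, your opening paragraph (the rotation $\mat{Q}$ and the projector $P$) is never used and should be deleted.
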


The proof is deferred to~\cref{app:gaussian-row-norm-estimates}. We use it to establish the following proposition.

\begin{proposition}[Sketchy pivoting]
\label{prop:sketched-rplu}
Let \cref{alg:sketchy-rplu} use a block size $b\geq3$.
At iteration $\ell$, let
\[
    \mat{A}^{(\ell)}
    :=
    \mat{A}
    -\mat{L}_{\col,1:\ell}\mat{U}_{1:\ell,\col}
\]
be the current residual. The probability of sampling the pivot $(i,j)$ satisfies
\[
  q_{ij}^{(\ell)}
  :=
  \Prob\left(
    (i_{\ell+1},j_{\ell+1})=(i,j)
    \,\middle|\,
    \mat{A}^{(\ell)}
  \right)
  \leq
  \frac{b}{b-2}
  \frac{\abs{\mat{A}_{ij}^{(\ell)}}^2}
       {\norm{\mat{A}^{(\ell)}}_F^2}.
\]
\end{proposition}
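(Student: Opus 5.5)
The plan is to factor the pivot probability according to the two-stage sampling in \cref{alg:sketchy-rplu} and to apply \cref{lem:gaussian-row-energy-sampling} to the row stage only. The column stage is exact: once row $i$ is chosen, the algorithm forms the true residual row $r_{\ell+1}=\mat{A}^{(\ell)}_{i,\col}$ and samples $j$ with probability $\abs{\mat{A}^{(\ell)}_{ij}}^2/\norm{\mat{A}^{(\ell)}_{i,\col}}_2^2$. This factor depends only on $\mat{A}^{(\ell)}$. Hence
\[
  q_{ij}^{(\ell)}
  =
  \Prob\bigl(i_{\ell+1}=i\mid\mat{A}^{(\ell)}\bigr)\,
  \frac{\abs{\mat{A}_{ij}^{(\ell)}}^2}{\norm{\mat{A}_{i,\col}^{(\ell)}}_2^2},
\]
and it suffices to show
\[
  \Prob\bigl(i_{\ell+1}=i\mid\mat{A}^{(\ell)}\bigr)\leq\frac{b}{b-2}\,\frac{\norm{\mat{A}^{(\ell)}_{i,\col}}_2^2}{\norm{\mat{A}^{(\ell)}}_F^2}.
\]
Rows with zero residual norm are never sampled, and if $\mat{A}^{(\ell)}=0$ the process has stopped, so we may assume $\mat{A}^{(\ell)}\neq0$.

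First, I will verify by induction on the downdate in line~\ref{line:sketch-downdate} that $\mat{Y}^{(\ell)}=\mat{A}^{(\ell)}\mat{\Omega}$. The update subtracts $\mat{L}_{\col,\ell+1}\mat{Y}^{(\ell)}_{i_{\ell+1},\col}=\mat{L}_{\col,\ell+1}\mat{A}^{(\ell)}_{i_{\ell+1},\col}\mat{\Omega}$, which is exactly the Schur-complement update applied to the sketch, as computed in the text for $\ell=0$. In particular $\mat{Y}^{(\ell)}_{\ell+1}=\mat{A}^{(\ell)}\mat{\Omega}_{\ell+1}$. Conditionally on $\mat{A}^{(\ell)}$ and the past sketch blocks, the row stage therefore chooses $i$ with probability
\[
  \frac{\norm{\mat{A}^{(\ell)}_{i,\col}\mat{\Omega}_{\ell+1}}_2^2}{\norm{\mat{A}^{(\ell)}\mat{\Omega}_{\ell+1}}_F^2}.
\]

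The key step, and the main subtlety, is independence. $\mat{A}^{(\ell)}$ is a function of the pivots $(i_1,j_1),\ldots,(i_\ell,j_\ell)$. Those pivots were drawn using only $\mat{\Omega}_1,\ldots,\mat{\Omega}_\ell$ and auxiliary sampling randomness, never $\mat{\Omega}_{\ell+1}$. Since the Gaussian blocks are independent, $\mat{\Omega}_{\ell+1}$ is independent of the $\sigma$-algebra $\mathcal{F}_\ell$ generated by the first $\ell$ pivots and blocks, and in particular of $\mat{A}^{(\ell)}$. It remains standard Gaussian conditionally on $\mathcal{F}_\ell$. I will state this carefully, noting that the full matrix $\mat{Y}^{(0)}$ contains $\mat{A}\mat{\Omega}_{\ell+1}$ but earlier pivot choices read only the earlier blocks.

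Finally, I will condition on $\mathcal{F}_\ell$ and average over $\mat{\Omega}_{\ell+1}$, applying \cref{lem:gaussian-row-energy-sampling} with $\mat{H}=\mat{A}^{(\ell)}$, which is fixed and nonzero given $\mathcal{F}_\ell$, and $b\geq3$. This gives
\[
  \Prob\bigl(i_{\ell+1}=i\mid\mathcal{F}_\ell\bigr)\leq\frac{b}{b-2}\,\frac{\norm{\mat{A}^{(\ell)}_{i,\col}}_2^2}{\norm{\mat{A}^{(\ell)}}_F^2}.
\]
The right-hand side depends only on $\mat{A}^{(\ell)}$, so taking conditional expectation given $\mat{A}^{(\ell)}$ (tower property) preserves the bound. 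Multiplying by the exact column factor cancels $\norm{\mat{A}^{(\ell)}_{i,\col}}_2^2$ and yields the claim. Consequently, \cref{alg:sketchy-rplu} is $\gamma$-approximate RPLU with $\gamma=(b-2)/b$, and \cref{cor:rplu-rates} applies.
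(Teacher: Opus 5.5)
Your proposal is correct and follows the paper's route: you factor the pivot probability into the sketched row stage and the exact column stage, use the downdate to identify the active block as $\mat{A}^{(\ell)}\mat{\Omega}_{\ell+1}$, average over the independent fresh block, and apply \cref{lem:gaussian-row-energy-sampling}. Conditioning on the full history $\mathcal{F}_\ell$ and then using the tower property is a slightly more careful version of the paper's independence step, and it gives the history-conditional bound that the path-probability argument in \cref{lem:rplu-det-moment} actually uses.
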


\begin{proof}
Let $\mat{H}:=\mat{A}^{(\ell)}$ denote the current residual.
By the sketch downdate in line~\ref{line:sketch-downdate} of
\cref{alg:sketchy-rplu}, the active block satisfies
\[
    \mat{Y}_{\ell+1}^{(\ell)}
    =
    \mat{H}\mat{\Omega}_{\ell+1}.
\]
If $\norm{\mat{H}_{i,\col}}_2=0$, then $q_{ij}^{(\ell)}=0$ and the result is
immediate. Thus assume $\norm{\mat{H}_{i,\col}}_2>0$. For a fixed fresh
sketch block, the algorithm samples row $i$ with probability
\[
    \frac{\norm{\mat{H}_{i,\col}\mat{\Omega}_{\ell+1}}_2^2}
         {\norm{\mat{H}\mat{\Omega}_{\ell+1}}_F^2}
\]
and then column $j$, conditional on row $i$, with probability
$\abs{\mat{H}_{ij}}^2/\norm{\mat{H}_{i,\col}}_2^2$.
Thus,
\[
    \Prob\left(
      (i_{\ell+1},j_{\ell+1})=(i,j)
      \,\middle|\,
      \mat{H},
      \mat{\Omega}_{\ell+1}
    \right)
    =
    \frac{\norm{\mat{H}_{i,\col}\mat{\Omega}_{\ell+1}}_2^2}
         {\norm{\mat{H}\mat{\Omega}_{\ell+1}}_F^2}
    \frac{\abs{\mat{H}_{ij}}^2}
         {\norm{\mat{H}_{i,\col}}_2^2}.
\]
Averaging this probability over the fresh sketch block gives
\[
\begin{aligned}
    q_{ij}^{(\ell)}
    &=
    \Prob\left(
      (i_{\ell+1},j_{\ell+1})=(i,j)
      \,\middle|\,
      \mat{H}
    \right)\\
    &=
    \E_{\mat{\Omega}_{\ell+1}}
    \left[
      \Prob\left(
        (i_{\ell+1},j_{\ell+1})=(i,j)
        \,\middle|\,
        \mat{H},
        \mat{\Omega}_{\ell+1}
      \right)
    \right]\\
    &=
    \E_{\mat{\Omega}_{\ell+1}}
    \left[
        \frac{\norm{\mat{H}_{i,\col}\mat{\Omega}_{\ell+1}}_2^2}
             {\norm{\mat{H}\mat{\Omega}_{\ell+1}}_F^2}
    \right]
    \frac{\abs{\mat{H}_{ij}}^2}
         {\norm{\mat{H}_{i,\col}}_2^2}.
\end{aligned}
\]
The second equality uses the independence of $\mat{\Omega}_{\ell+1}$ and
$\mat{H}$.

By \cref{lem:gaussian-row-energy-sampling},
\[
\begin{aligned}
    q_{ij}^{(\ell)}
    &\leq
    \frac{b}{b-2}
    \frac{\norm{\mat{H}_{i,\col}}_2^2}{\norm{\mat{H}}_F^2}
    \frac{\abs{\mat{H}_{ij}}^2}
         {\norm{\mat{H}_{i,\col}}_2^2} \\
    &=
    \frac{b}{b-2}
    \frac{\abs{\mat{H}_{ij}}^2}
         {\norm{\mat{H}}_F^2}.
\end{aligned}
\]
\end{proof}

This proposition shows that Gaussian sketchy pivoting with $b=3$ is a $1/3$-approximate
RPLU scheme and therefore satisfies the same algebraic and geometric
convergence bounds as exact RPLU, up to a factor of $3$.

The cost of $k$ iterations of~\cref{alg:sketchy-rplu} is
\[
    T_{\mathrm{sketch}}(\mat{A},3k)
    +
    \mathcal O\bigl((m+n)k^2\bigr)
\]
operations, where $T_{\mathrm{sketch}}(\mat{A},3k)$ denotes the cost of
forming $\mat{A}\mat{\Omega}$ for a sketch matrix
$\mat{\Omega}\in\R^{n\times 3k}$, and thus has the same asymptotic complexity
as, for example, ~\cite[Algorithm~2]{DongMartinsson2023}.

However, \cref{alg:sketchy-rplu} differs from the sketch-based methods
in~\cite{DongMartinsson2023,PearceEtAl2025,PritchardEtAl2025} that motivated it in three important ways.
The first is the use of blocking (i.e. choose multiple pivots before accessing the original matrix),
and the second, in the case of~\cite{PritchardEtAl2025}, the use of small recycled sketches.
Neither can be covered by our current analysis, and likely requires substantially new ideas and assumptions.

The third is that~\cite{DongMartinsson2023,PearceEtAl2025,PritchardEtAl2025} choose pivots greedily from the
sketch, typically using partial pivoting, rather than randomly.
That is, they use a pivoting strategy more akin to C2PLU, defined as:
\[
    i_{\ell+1}
    \in
    \argmax_i
    \norm{\mat{A}^{(\ell)}_{i,\col}}_2,
    \qquad
    j_{\ell+1}
    \in
    \argmax_j
    \abs{\mat{A}^{(\ell)}_{i_{\ell+1},j}}.
\]
rather than approximate RPLU.\footnote{For example, with $b=1$, the partial pivoting option in
IterativeCUR~\cite[Algorithm~3.1]{PritchardEtAl2025} is equivalent to C2PLU with the row norms estimated from the
sketch.}

It is natural to ask whether the same analysis can cover this greedy pivoting
choice. Unfortunately, the direct bounds for C2PLU depend on the dimension.
Indeed, one can easily check that C2PLU is $n^{-1/2}$-approximate greedy
pivoting in the sense of~\cite{Gilles_convergence}, and
$(mn)^{-1}$-approximate RPLU in the sense
of~\cref{eq:approximate-sampling}. Thus, it inherits both convergence rates,
but with dimension-dependent prefactors.

\section{Conclusion}

We established algebraic and geometric convergence rates for approximate
RPCholesky, RPQR, and RPLU. The main tools are the pivot-product identity and
estimates of elementary symmetric polynomials. Approximate sampling changes
the bounds only by the prefactor $\gamma^{-1}$, allowing the same analysis to
cover the two cheaper variants of RPLU introduced here.

These results naturally raise the question of whether greedy or randomized
pivoting is preferable.
Under algebraic spectral decay, RPCholesky and RPQR achieve the same
asymptotic rates as their greedy counterparts. These rates also match the
optimal trace and squared Frobenius errors, respectively.
The comparison between RPLU and its greedy counterpart is less direct. Let
$\llbracket\mat{A}\rrbracket_k$ denote the rank-$k$ truncated SVD of
$\mat{A}$. In the model case
$\sigma_j(\mat{A})=j^{-\alpha}$, with $\alpha>1$, our RPLU result gives
\[
    \E\min_{0\leq\ell\leq k}
    \norm{\mat{A}-\mat{\widehat A}^{(\ell)}}_F^2
    =
    \mathcal{O}(k^{2-2\alpha}),
    \qquad
    \norm{\mat{A}-\llbracket\mat{A}\rrbracket_k}_F^2
    =
    \Theta(k^{1-2\alpha}).
\]
Thus, RPLU loses one factor of $k$ relative to the optimal squared Frobenius
error. By comparison, the greedy CPLU result from~\cite{Gilles_convergence}
is
\[
\begin{aligned}
    \min_{0\leq\ell\leq k}
    \norm{\mat{A}-\mat{\widehat A}^{(\ell)}}_{\max}
    &=
    \mathcal{O}(k^{-\alpha}),\\
    \norm{\mat{A}-\llbracket\mat{A}\rrbracket_k}_{\max}
    &\leq
    \sigma_{k+1}(\mat{A})
    =
    (k+1)^{-\alpha}
    \leq
    \sqrt{mn}\,
    \norm{\mat{A}-\llbracket\mat{A}\rrbracket_k}_{\max}.
\end{aligned}
\]
Thus, CPLU follows the singular-value decay without a loss factor, but in max norm; and
Frobenius-norm error can be as much as a factor $\sqrt{mn}$ larger than the
max-norm error.

The bounds under geometric decay are less satisfactory for both randomized
and greedy methods. If the singular values decay as
$\mathcal{O}(\rho^j)$, with $0<\rho<1$, then the optimal rank-$k$ error also decays as
$\mathcal{O}(\rho^k)$, whereas our bounds give
$\mathcal{O}(k\rho^{k/2})$ RPCholesky and
RPLU, and
$\mathcal{O}(\sqrt{k}\rho^{k/2})$ for RPQR.

The source of this gap is clear in the proof. Our analysis, as well as the one
in~\cite{Gilles_convergence}, relies on the pivot-product
identity, which controls a product of the first $k$ residual errors. To obtain
a bound for a single iterate, we compare its error with the geometric mean of
the preceding errors. Under geometric spectral decay, this step gives a rate
of $\rho^{k/2}$ rather than $\rho^k$. In practice, however, the observed
errors appear to follow the same $\mathcal{O}(\rho^k)$ rate as the optimal
approximation.

The practical evidence on greedy versus randomized pivoting is similarly
mixed. Randomized pivoting can be more robust to outliers
\cite{ChenEtAl2025,GillesWilber2026}, whereas experiments on sparse matrices sometimes find greedy pivoting more
accurate \cite{GillesWilber2026,fornace2024column}.

Randomization nevertheless has a distinct advantage in the present
analysis: estimates of the residual row norms can be converted directly into
convergence guarantees for approximate randomized pivoting, as done in~\cref{sec:rgp,sec:sketchy-rplu}.
No such dimension-independent guarantees currently exist for greedy pivoting.

Several open questions remain. Can the geometric-mean loss in the
pivot-product analysis be removed, recovering the
$\mathcal{O}(\rho^k)$ rate observed in practice? Under what assumptions can
the analysis accommodate blocked pivot selection or a recycled
sketch? Can greedy sketch-based pivoting satisfy dimension-independent
guarantees?

\section*{AI Acknowledgement}

The extension to sketchy pivoting was simplified through an
interaction with OpenAI's GPT-5.6 Sol Pro. The model was also used to
assist with writing and with producing figures and code. All other
results were proved by the authors, based on the results for greedy
pivoting in~\cite{Gilles_convergence}.

\appendix

\section{Proof of the elementary symmetric polynomial estimates}
\label[appendix]{app:esp-decay}

\begin{proof}[Proof of \cref{lem:esp-decay}]
For nonnegative arguments, $e_k$ is componentwise nondecreasing. Thus,
\[
  e_k(\lambda_1,\ldots,\lambda_n)
  \leq
  C^k e_k(1,2^{-\alpha},\ldots,n^{-\alpha}).
\]
For $t>0$, consider the generating function for $e_k$
\cite[Section~1.2, Equation~2.2]{Macdonald1998}:
\[
  \prod_{j=1}^n(1+t j^{-\alpha})
  =\sum_{r=0}^n t^r e_r(1,2^{-\alpha},\ldots,n^{-\alpha}).
\]

Since every term in the sum is nonnegative and $e_k$ appears in the $k$th term,
\[
  e_k(1,2^{-\alpha},\ldots,n^{-\alpha})
  \leq
  t^{-k}\prod_{j=1}^n(1+t j^{-\alpha}).
\]
We bound the log of the product on the right-hand side. Since the integrand is decreasing:
\begin{align*}
  \sum_{j=1}^n\log(1+t j^{-\alpha})
  &\leq\int_0^\infty\log(1+t x^{-\alpha})\,\mathrm{d}x\\
  &=t^{1/\alpha}\int_0^\infty\log(1+u^{-\alpha})\,\mathrm{d}u\\
  &=t^{1/\alpha}\frac{\pi}{\sin(\pi/\alpha)},
\end{align*}
where the second line follows from the change of variables
$u=t^{-1/\alpha}x$, and the third from the identity
\[
  \int_0^\infty\log(1+u^{-\alpha})\,\mathrm{d}u
  =\frac{\pi}{\sin(\pi/\alpha)},
\]
see~\cite[Formula~4.293.3]{GradshteynRyzhik2007}.

Thus, for all $t>0$,
\[
  e_k(\lambda_1,\ldots,\lambda_n)
  \leq
  C^k t^{-k}\exp\left(\frac{\pi t^{1/\alpha}}{\sin(\pi/\alpha)}\right).
\]
Setting the derivative of the right-hand side to zero shows that it is minimized at
\[
  t=\left(\frac{\alpha k\sin(\pi/\alpha)}{\pi}\right)^\alpha.
\]
Substituting this value of $t$ and taking the $k$th roots gives
\[
  e_k(\lambda_1,\ldots,\lambda_n)^{1/k}
  \leq
  C\left(\frac{\pi\e}{\alpha k\sin(\pi/\alpha)}\right)^\alpha.
\]

For the geometric estimate, we use the fact that $e_k$ is nondecreasing under additional arguments when all arguments are nonnegative, so
\[
  e_k(\lambda_1,\ldots,\lambda_n)
  \le e_k(C,C\rho,\ldots,C\rho^{n-1})
  \leq C^ke_k(\rho^0,\rho^1, \rho^2, \ldots).
\]
Then, the $q$-binomial identity \cite[Section~1.2, Example~4]{Macdonald1998} gives

\[
  e_k(\rho^0,\rho^1, \rho^2, \ldots)
  = \frac{\rho^{k(k-1)/2}}{\prod_{j=1}^k(1-\rho^j)}
  \leq \left(\frac{\rho^{(k-1)/2}}{1-\rho}\right)^k.
\]

Using these two upper bounds and taking the $k$th root gives
\[
e_k(\lambda_1,\ldots,\lambda_n)^{1/k} \le \frac{C}{1-\rho} \rho^{(k-1)/2}.
\]
\end{proof}

\section{Proof of the Gaussian row norm estimates}
\label[appendix]{app:gaussian-row-norm-estimates}

\begin{proof}[Proof of \cref{lem:gaussian-row-energy-sampling}]
Set
\[
    w_j:=\norm{\mat{H}_{j,\col}}_2^2,
    \qquad
    \widehat w_j:=\norm{\mat{H}_{j,\col}\mat{\Omega}}_2^2,
\]
and
\[
    W:=\sum_jw_j=\norm{\mat{H}}_F^2,
    \qquad
    \widehat W:=\sum_j\widehat w_j
    =\norm{\mat{H}\mat{\Omega}}_F^2.
\]
If $w_i=0$, then $\mat{H}_{i,\col}=0$ and the result is immediate for that
index. Removing all such zero rows leaves both denominators unchanged, so
assume $w_i>0$ for every $i$.

Define
\[
    u_j:=\frac{\mat H_{j,\col}^{\top}}{\sqrt{w_j}},
    \qquad
    X_j:=\frac{\widehat w_j}{w_j}
        =\norm{\mat\Omega^{\top}u_j}_2^2.
\]
Thus, \(\norm{u_j}_2=1\), \(\widehat w_j=w_jX_j\), and
\(X_j\sim\chi_b^2\).

By Cauchy--Schwarz,
\[
\begin{aligned}
    W^2
    &=
    \left(
        \sum_{j}
        \sqrt{w_jX_j}\sqrt{\frac{w_j}{X_j}}
    \right)^2 \\
    &\leq
    \left(\sum_{j} w_jX_j\right)
    \left(\sum_{j} \frac{w_j}{X_j}\right)
    =
    \widehat W
    \sum_{j} \frac{w_j}{X_j}.
\end{aligned}
\]
Using this upper bound and $\widehat w_i=w_iX_i$ gives
\[
    \E_{\mat{\Omega}}\frac{\widehat w_i}{\widehat W}
    \leq
    \frac{w_i}{W^2}
    \sum_{j}
    w_j\,\E_{\mat{\Omega}}\frac{X_i}{X_j}.
\]

To bound these ratios, fix $j$ and set
$\rho:=u_i^\top u_j$. The variables $X_i$ and $X_j$ are obtained by summing
the squares of $b$ pairs of standard Gaussian variables with correlation
$\rho$. Applying
\cite[Corollary~3.8(i)]{Joarder2009} with $m=b$ to
$X_i/X_j$ gives
\[
    \E_{\mat{\Omega}}\frac{X_i}{X_j}
    =
    \frac{b-2\rho^2}{b-2}
    \leq
    \frac{b}{b-2}.
\]

Substituting this estimate above gives
\[
\begin{aligned}
    \E_{\mat{\Omega}}\frac{\widehat w_i}{\widehat W}
    &\leq
    \frac{b}{b-2}
    \frac{w_i}{W^2}
    \sum_{j} w_j \\
    &=
    \frac{b}{b-2}\frac{w_i}{W},
\end{aligned}
\]
as claimed.
\end{proof}

\bibliographystyle{plain}
\bibliography{references}

\end{document}